\newtheorem{theorem}{Theorem}[section]
\newtheorem{lemma}[theorem]{Lemma}
\newtheorem{corollary}[theorem]{Corollary}
\theoremstyle{definition}
\newtheorem{example}[theorem]{Example}
\theoremstyle{remark}
\newtheorem{remark}[theorem]{Remark}
\numberwithin{equation}{section}
\begin{document}
	
	\title{The $n$-total graph of an integral domain}
	
	
	\author{Myriam AbiHabib}
	\address{Department of Mathematics and Statistics, American University of Sharjah, P.O. Box 26666, Sharjah, United Arab Emirates}
	\email{g00096222@alumni.aus.edu}
	
	
	\author{Ayman Badawi}
	\address{Department of Mathematics and Statistics, American University of Sharjah, P.O. Box 26666, Sharjah, United Arab Emirates}
	
	\email{abadawi@aus.edu}
	\subjclass[2020]{Primary 13A15; Secondary 13B99, 05C99}
	\keywords{total graph, zerodivisor graph, generalized total graph, annihilator graph, zerodivisor, connected, diameter, girth}
	
	\vskip0.2in
	
	\begin{abstract}
	Let $R$ be a finite product of integral domains and $D$ be a union of prime ideals (it is possible that $R$ is just an integral domain). Let $n \geq 1$ be a positive integer. This paper introduces the $n$-total graph of a $(R, D)$. The $n$-total graph of  $(R, D)$, denoted by $n-T(R)$, is an undirected simple graph with vertex set $R$, such that two vertices $x, y$ in $R$ are connected by an edge if $x^n + y^n \in D$. In this paper, we study some graph properties and theoretical ring structure.

	\end{abstract}
	
	\maketitle
	\centerline{Dedicated to Professors  Mohammad Ashraf and  R. K. Sharma }
\section{Intrudection}

Throughout this paper, all rings are commutative with $1\not = 0$. 
The use of graph theory to study algebraic structures has become increasingly popular since Beck's work in \cite{9}, in which he presented a coloring of commutative rings based on multiplication to zero. The result of this was the formalization of the zero-divisor graph, a construction where distinct vertices $x$ and $y$ are adjacent if $xy=0$. Additional research by Anderson and Naseer \cite{3} and Anderson and Livingston \cite{8} established the fundamental characteristics of zero-divisor graphs, including their connectivity and diameter bounds. In \cite{5}, Anderson and Badawi introduced the total graph, where adjacency is based on the condition $x+y \in Z(R)$, to investigate additive behavior within rings.

The concept was generalized further in \cite{7}, where Anderson and Badawi defined the generalized total graph $GT(R, H)$, using a subset $H \subseteq R$, such that two distinct vertices $x, y \in R$ are adjacent (i.e., connected by an edge) if $x+y \in H$. A consequence of the study in \cite{7}, the total graph of a commutative ring is shown to be a special case, allowing the study of the total graph of integral domains.

Related advancements increased the number of tools available for connecting graphs to rings. Akbari $\&$ Co. Regular graphs of commutative rings were examined by \cite{2}, who compared their structure to that of total graphs and looked into adjacency defined by multiplicative and additive conditions. The subsequent work by Anderson and Badawi in \cite{6} altered the total graph by eliminating the zero element, resulting in a more refined graph structure that helps identify particular ring-theoretic properties, especially in reduced rings.

A unified treatment of these constructions is offered by foundational overviews such as Graphs from Rings \cite{4}, which show how different algebraic behaviors, including multiplicative, additive, and module-theoretic ones, can be modeled by graphs. A long list of researchers who have contributed significantly to the theory of graphs from an algebraic perspective can be found in \cite{4}.

Let $G(V, E)$ be a graph. We recall that $G = (V,E)$ is called  {\it connected} if there is a path from $u$ to $v$ in $G$ for all vertices $u, v \in V$; $G(V, E)$ is said to be {\it disconnected} if there is a pair of vertices $u, v$ in $V$ that do not have a path between them; and 
$G = (V,E)$ is called {\it totally disconnected} if there is
no path from $u$ to $v$ in $G$ for all vertices $u, v \in V$. The {\it distance} between two vertices $u$ and $v$ in a graph $G = (V,E)$ is
the length of the shortest path between $u$ and $v$ in $G$. It is denoted by $d(u, v)$. If there is no path, we say $d(u, v) = \infty$. The diameter of a graph $G = (V,E)$ is the supremum of distances between vertices $u$ and $v$ in $V$. We denote it by $diam(G)$.The girth of a graph $G = (V,E)$ is the length of the shortest cycle
in $G$ denoted by $gr(G)$. If there are no cycles in $G$, we say $gr(G) = \infty$. A {\it complete} graph $G = (V,E)$ is a graph in which every vertex
$v \in V$ is adjacent to every other vertex $u \in V$ . A complete graph with $n$ vertices is denoted $K_n$. A {\it complete bipartite} graph $G = (V,E)$ is a graph in which the set of vertices is partitioned into two sets $V_1, V_2$ such that $V1 \cap V2 = \emptyset$, every two vertices in $V_1$ are not adjacent, and every two vertices in $V_2$ are not adjacent. In addition, every vertex in $V_1$ is adjacent to every vertex in $V_2$. A {\it complete bipartite} graph with $|V_1| = m$ and $|V_2| = n$ is denoted $K_{n,m}$. A subgroup $C$ of a graph $G$ is called a {\it component} of $G$ if $C$ is connected and every vertex in $C$ is not connected to every vertex outside $C$. 

Recently, AitElhadi and Badawi \cite{1} generalized the concept of the total graph of a commutative ring to $n$-total graph of a commutative ring. Recall from \cite{1}, let $R$ be a commutative ring with $Z(R)$ as the set of all zero divisors of $R$, and $n \geq 1$. {\it The $n$-total graph of $R$}, denoted by $n-T(R)$ is a simple undirected graph with vertex set $R$ such that two vertices $x,y$ in $R$ are adjacent if $x^n+y^n \in Z(R)$. Since $Z(R)$ is a union of prime ideals, in this study we replace    $Z(R)$ in \cite{1} by a set $D$ that is a union of prime ideals. We define a new graph over $(R, D)$, where $R$ is a finite product of integral domains and $D$ is a union of incomparable (under inclusion) prime ideals of $R$. Notice that $R$ may be an integral domain. Let $n \geq 1$ be a positive integer.{\it  The $n$-total graph} of  $(R, D)$, denoted by $n-T(R)$, is an undirected simple graph with vertex set $R$, such that two vertices $x, y$ in $R$ are connected by an edge (i.e., adjacent) if $x^n + y^n \in D$.

In this paper, we analyze the properties of the n-total graph of an integral domain with respect to a union of prime ideals, focusing on connectedness, diameter, and girth.
Notice that the subgraph $n-TG(D)$ of the $n-TG(R)$ is always connected, and the $n-TG(D)$ is complete if and only if $D$ is an ideal of $R$. Moreover, if $D$ is an ideal of $R$, then the induced subgraphs $n-TG(D)$ and $n-TG(R\setminus D)$ are disjoint subgraphs of the $n-TG(R)$. However, if $D$ is not an ideal of R, then the induced subgraphs $n-TG(D)$ and $n-TG(R\setminus D)$ of the $n-TG(R)$ are never disjoint.

\section{The case where $D$ is an ideal}
Let $n \geq 1$ be an integer and $R$ be a finite field such that $D$ is a prime ideal of $R$. This implies that $D = \{0\}$ and $|D|=1$.
Let $|R|=m$ and $d=gcd(n,m-1)$. It is clear that the induced subgraph $n-TG(D)$ is a complete component of the $n-TG(R)$. Therefore, this section will focus on the $n-TG(R \setminus D)$. Let $n \geq 1$ and $S_n = \{a^n  |   a \in  R \setminus D\}$. The following known Lemmas are used as tools in order to prove some of the results in this paper. We leave the proofs to the reader. 

\begin{lemma}\label{lem:1}
	Let $R$ be a finite field with $m$ elements and $d=gcd(n,m-1)$.
	Then   $S_n = \{a^n  |   a \in  R \setminus D\}$  is a cyclic subgroup of $R \setminus D$ and $|S_n| = \frac{m-1}{d}$.
\end{lemma}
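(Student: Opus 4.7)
The plan is to reduce this to the standard structure theorem for the multiplicative group of a finite field. Since $R$ is a finite field and $D = \{0\}$, the set $R \setminus D$ equals the multiplicative group $R^* = R \setminus \{0\}$, which has order $m-1$, and it is well known to be cyclic. So I would begin by fixing a generator $g$ of $R^*$ and writing every element as a power of $g$.

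Next I would introduce the $n$-th power map $\phi : R^* \to R^*$ defined by $\phi(a) = a^n$. This is a group homomorphism (the multiplicative group is abelian), and by definition its image is exactly $S_n$. Since $R^*$ is cyclic and the homomorphic image of a cyclic group is cyclic, this immediately gives that $S_n$ is a cyclic subgroup of $R^*$, settling the first assertion.

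For the cardinality, I would observe that $S_n = \phi(R^*) = \langle g^n \rangle$, because every element of the image is $g^{kn}$ for some $k$. Then I would invoke the standard fact that in a cyclic group of order $m-1$, the order of $g^n$ equals $(m-1)/\gcd(n, m-1) = (m-1)/d$. Hence $|S_n| = (m-1)/d$, as required. Alternatively, one could use the first isomorphism theorem: $\ker \phi = \{a \in R^* : a^n = 1\}$ is the unique subgroup of order $d$ in the cyclic group $R^*$, so $|S_n| = |R^*|/|\ker \phi| = (m-1)/d$.

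There is no real obstacle here; the only point to be careful about is the interpretation of ``cyclic subgroup of $R \setminus D$'', which must be read as a subgroup of the multiplicative group $R^* = R \setminus \{0\}$ (the additive structure is irrelevant). Everything else follows from textbook facts about finite fields and cyclic groups.
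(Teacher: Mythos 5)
Your proof is correct and is exactly the standard argument the paper has in mind: the paper explicitly labels this a known lemma and leaves the proof to the reader, and your reduction to the cyclicity of $R^* = R\setminus\{0\}$ together with the fact that $g^n$ has order $(m-1)/\gcd(n,m-1)$ (or equivalently the first isomorphism theorem applied to the $n$-th power map) is the intended textbook route. Nothing is missing.
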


\begin{lemma}\label{lem:2}
	Let $R$ be a finite field. Then $x^n = a$  has a solution for some $a \in R\setminus D$ if and only if $a \in S_n$. Furthermore,  if $a \in S_n$, then  $x^n = a$  has exactly $d$ solutions.
\end{lemma}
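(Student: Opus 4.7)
The plan is to treat the statement as two essentially separate claims and reduce both to standard facts about the cyclic group $R^* = R \setminus \{0\} = R \setminus D$, which has order $m-1$.

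For the biconditional, I would argue directly from the definition of $S_n$. If $x^n = a$ for some $x \in R$ and $a \in R \setminus D$, then $a \neq 0$ forces $x \neq 0$, so $x \in R \setminus D$ and hence $a = x^n \in S_n$ by definition. Conversely, if $a \in S_n$, the defining description $S_n = \{b^n : b \in R \setminus D\}$ produces a witness $x = b \in R \setminus D$ with $x^n = a$. This direction requires no more than unwinding definitions.

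The counting assertion is the substantive part. I would fix a generator $g$ of the cyclic group $R^*$ (which exists because the multiplicative group of a finite field is cyclic) and consider the homomorphism $\phi : R^* \to R^*$ defined by $\phi(x) = x^n$. Its image is exactly $S_n$. By Lemma \ref{lem:1} we already know $|S_n| = (m-1)/d$, so by the first isomorphism theorem applied to $\phi$, the kernel $\ker(\phi) = \{x \in R^* : x^n = 1\}$ has order $(m-1)/|S_n| = d$. Since the nonempty fibers of a group homomorphism are cosets of the kernel, each fiber $\phi^{-1}(a)$ with $a \in S_n$ contains exactly $d$ elements. Thus $x^n = a$ has exactly $d$ solutions in $R^*$, and since $a \neq 0$ there are no extra solutions with $x = 0$.

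There is no real obstacle here; the only thing to be slightly careful about is the trivial check that no solution $x$ of $x^n = a$ can be $0$ when $a \in R \setminus D$, so the count in $R^*$ is also the count in $R$. Everything else is immediate from the cyclic structure of $R^*$ together with Lemma \ref{lem:1}.
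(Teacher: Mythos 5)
Your proof is correct. The paper gives no proof of this lemma (it is stated as a known fact with ``we leave the proofs to the reader''), and your argument---reducing the count to the kernel of the $n$-th power homomorphism on the cyclic group $R^*$, using Lemma~\ref{lem:1} and the first isomorphism theorem, with the trivial check that $x=0$ contributes no solutions---is exactly the standard argument the authors are implicitly invoking.
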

 We invite the reader to compare the following result with \cite[ Theorem 2.6]{1}.
\begin{theorem}
	Let $R$ be a finite field with $m$ elements such that $char(R) = 2$ (i.e., $1 + 1 = 0$), $n \geq 1$ be an integer,  $d = gcd(n, m-1)$, and $\alpha = \frac{m-1}{d}$.
	Then, $n-TG(R) = K_d \oplus K_d \oplus …\oplus (\alpha\  times) \oplus  K_1$.
\end{theorem}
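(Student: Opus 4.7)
The plan is to read the adjacency condition carefully in characteristic $2$. Since $D = \{0\}$ and $\mathrm{char}(R) = 2$, two vertices $x, y \in R$ with $x \neq y$ are adjacent in $n\text{-}TG(R)$ if and only if $x^n + y^n = 0$, which (because $-1 = 1$) is equivalent to $x^n = y^n$. The whole proof will be driven by this single reformulation.

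First I would isolate the zero vertex: for $y \in R$, we have $0 \sim y$ iff $y^n = 0$, i.e., $y = 0$, so $0$ is an isolated vertex and contributes the $K_1$ component. Next, on $R \setminus D = R \setminus \{0\}$, define the equivalence relation $x \sim y \iff x^n = y^n$. The equivalence classes partition $R \setminus \{0\}$, and two distinct nonzero vertices are adjacent in $n\text{-}TG(R)$ exactly when they lie in the same class. Thus the induced subgraph on each class is complete, and there are no edges between distinct classes; once the class sizes and number of classes are pinned down, the decomposition follows immediately.

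To pin those down I would apply the two preceding lemmas. By Lemma~\ref{lem:1}, the image set $S_n = \{a^n : a \in R \setminus \{0\}\}$ has cardinality $\alpha = \frac{m-1}{d}$, so the equivalence relation has exactly $\alpha$ classes, one per element of $S_n$. By Lemma~\ref{lem:2}, for each $a \in S_n$ the equation $x^n = a$ has exactly $d$ solutions in $R \setminus \{0\}$, so every class has size $d$. Consequently each class induces a copy of $K_d$, and summing sizes gives $\alpha \cdot d + 1 = (m-1) + 1 = m = |R|$, matching the vertex count.

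I expect no real obstacle here; the only thing to be careful about is that the passage $x^n + y^n = 0 \Leftrightarrow x^n = y^n$ genuinely uses $\mathrm{char}(R) = 2$ (this is precisely why the characteristic hypothesis appears in the statement, and why the odd-characteristic case deserves a separate treatment as in \cite[Theorem 2.6]{1}). Conclude by writing
\[
n\text{-}TG(R) \;=\; \underbrace{K_d \oplus K_d \oplus \cdots \oplus K_d}_{\alpha \text{ times}} \,\oplus\, K_1.
\]
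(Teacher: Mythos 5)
Your proposal is correct and follows essentially the same route as the paper: partition $R \setminus \{0\}$ into the fibers $C_n(a) = \{x : x^n = a\}$ of the $n$-th power map, use Lemma~\ref{lem:2} to get fiber size $d$ and Lemma~\ref{lem:1} to get $\alpha$ fibers, and observe that characteristic $2$ turns the adjacency condition into $x^n = y^n$. Your version is marginally more careful in explicitly noting that there are no edges between distinct fibers (so each $K_d$ really is a component), a point the paper's proof leaves implicit.
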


\begin{proof}
	For every $a \in S_n$, let $C_n(a) = \{b \in R\setminus D\  | b^n = a\}$.
	Then  $|C_n(a)| =  d$  by Lemma~\ref{lem:2}. Since $char(R ) = 2$, every two distinct vertices in $C_n(a)$ are adjacent in the $n-TG(R)$. Hence, $C_n(a)$ elements form the complete component $K_d$.  Since $|S_n| = \alpha $ by  Lemma~\ref{lem:1},  we have exactly $\alpha$ complete components $K_d$.  Since D forms the component $K_1$  of the $n-TG(R)$ , we have $n-TG(R) = K_d \oplus K_d \oplus … \oplus K_d  \ (\alpha \  times) \oplus  K_1$.
\end{proof}

\begin{example}
	Let $\mathbb{F}_4=\mathbb{F}_2[X]/(f(x))$, where $f(x)=x^2+x+1$. Then, $3-TG(\mathbb{F}_4)=K_3 \oplus K_1$ as shown in Figure \ref{fig:F4graph}.
\end{example}

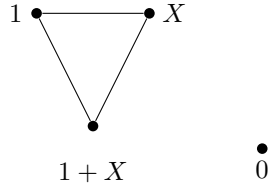
\begin{figure}[h]
	\centering
	\begin{tikzpicture}[scale=1.5, every node/.style={circle, inner sep=1pt}]
		\node[fill, circle, minimum size=4pt, inner sep=0pt, label=left:{$1$}] (v1) at (0,1) {};
		\node[fill, circle, minimum size=4pt, inner sep=0pt, label=right:{$X$}] (v2) at (1,1) {};
		\node[fill, circle, minimum size=4pt, inner sep=0pt, label=below:{$1+X$}] (v3) at (0.5,0) {};
		\node[fill, circle, minimum size=4pt, inner sep=0pt, label=below:{$0$}] (v0) at (2,-0.2) {};
		
		\draw (v1) -- (v2);
		\draw (v1) -- (v3);
		\draw (v2) -- (v3);
		
		
	\end{tikzpicture}
	\caption{The \(3\text{-TG}(\mathbb{F}_4)\)}
	\label{fig:F4graph}
\end{figure}

\begin{example}
	Let $\mathbb{F}_4=\mathbb{F}_2/(f(x))$ where $f(x)=x^2+x+1$ and let $n=5$.
	Here $d=1$ and $\alpha=3$. Then, the $5-TG(\mathbb{F}_4 \setminus D)$ is totally disconnected.
\end{example}
 We invite the reader to compare the following result with \cite[ Theorem 2.9]{1}.
\begin{theorem}\label{T2}
	Let $R$ be a finite field of order $m$ such that $char(R) \neq 2$ ,  $n\geq 1$ be an integer,  $d = gcd(n, m-1)$, and $\alpha = \frac{m-1}{d}$. Then
	\begin{enumerate}
		\item  The $n-TG(R \setminus D)$ is totally disconnected if and only if $\alpha$ is odd.
		\item If $\alpha$ is even, then the $n-TG(R \setminus D)=K_{d,d} \oplus ... \oplus K_{d,d}$ , $\frac{\alpha}{2}$ times.
	\end{enumerate}
\end{theorem}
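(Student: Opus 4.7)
The plan is to work with $D=\{0\}$ (forced because $R$ is a finite field), so the adjacency condition simplifies to $x^n = -y^n$. As in the previous theorem, I will partition $R\setminus\{0\}$ into the fibers $C_n(a) = \{b \in R\setminus\{0\} : b^n = a\}$, for $a \in S_n$, and note that each fiber has size $d$ by Lemma~\ref{lem:2} and there are exactly $\alpha$ of them by Lemma~\ref{lem:1}. Two vertices $x \in C_n(a)$ and $y \in C_n(b)$ are adjacent in $n\text{-}TG(R)$ if and only if $a+b = 0$.

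The crucial algebraic observation is that $-1 \in S_n$ if and only if $\alpha$ is even. Indeed, $R^*$ is cyclic of order $m-1$, and since $\operatorname{char}(R) \neq 2$, $-1$ is the unique element of order $2$. Also $S_n$ is the unique subgroup of $R^*$ of order $\alpha$, so $-1 \in S_n$ iff $2 \mid \alpha$. This equivalence powers both parts of the theorem.

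For part (1), I would argue both directions. If $\alpha$ is odd, then $-1 \notin S_n$, so for any distinct nonzero $x,y$ an edge would give $(xy^{-1})^n = -1 \in S_n$, a contradiction; hence $n\text{-}TG(R\setminus D)$ has no edges and is totally disconnected. Conversely, if $\alpha$ is even, then $-1 \in S_n$, and for any $y \in R\setminus\{0\}$ the element $-y^n$ lies in $S_n$, hence admits an $n$-th root $x$ that is adjacent to $y$.

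For part (2), with $\alpha$ even, the map $a \mapsto -a$ is a fixed-point-free involution on $S_n$ (since $2a \neq 0$ for $a \neq 0$ in characteristic $\neq 2$), giving $\alpha/2$ pairs $\{a,-a\}$. For each such pair I would verify three things: (i) no two distinct vertices in the same fiber $C_n(a)$ are adjacent, because their sum of $n$-th powers is $2a \neq 0$; (ii) every vertex in $C_n(a)$ is adjacent to every vertex in $C_n(-a)$, giving a copy of $K_{d,d}$; and (iii) no edges run between $C_n(a) \cup C_n(-a)$ and any $C_n(b) \cup C_n(-b)$ from a different pair, since otherwise $a = -b$ or $a = b$. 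These three points together show that $n\text{-}TG(R\setminus D)$ decomposes as $\alpha/2$ disjoint copies of $K_{d,d}$. The main conceptual step, rather than an obstacle, is the characterization $-1 \in S_n \iff \alpha$ even; once that is in hand everything else is a clean bookkeeping of the fibers $C_n(a)$.
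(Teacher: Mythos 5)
Your proof is correct and follows essentially the same route as the paper: both arguments hinge on the fibers $C_n(a)$ from Lemmas~\ref{lem:1} and~\ref{lem:2} together with the observation that $-1$ is an $n$-th power if and only if $\alpha$ is even (the paper phrases the ``only if'' direction by raising $x^n=-y^n$ to the power $\alpha$, which is the same fact in disguise). If anything, your write-up is the more complete one, since you explicitly verify that no edges run between distinct pairs $\{a,-a\}$ and $\{b,-b\}$, a point the paper's proof of part (2) leaves implicit.
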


\begin{proof}
	\begin{enumerate}
		\item Suppose that the $n-TG(R \setminus D)$ is totally disconnected. Assume that $ \alpha$ is even. Then $S_n(R) = \{b^n \mid b \in R^*\}$ is the unique cyclic subgroup of $R^*$ of order $\alpha$ by Lemma~\ref{lem:1}. Since $\alpha$ is even, $-1  \in C_n(R)$ and since $char(R) \not = 2,$ we have $1  \not = -1 $.  Hence, there are $x, y \in R \setminus D$ such that $x^n + y^n = 0 \in R$; a contradiction since the $n-TG(R \setminus D)$ is totally disconnected.\\
		For the converse, suppose that $\alpha$ is odd and the $n-TG(R \setminus D)$ is not totally disconnected. Then, there exists $x,y \in D$ such that $x^n+y^n \in D$. Thus, $x^n=-y^n$. Raising both sides to the power of $\alpha$, we get $(x^n)^\alpha = (-y^n)^\alpha$, which implies that $x^{n\alpha}=-y^{n\alpha}$ since $\alpha$ is odd. Hence, $1=-1$. This is a contradiction, since $char(R) \neq 2$. Therefore, the $n-TG(R\setminus D)$ is totally disconnected.
		\item Suppose that $x$ and $y$ are adjacent in the $n-TG(R\setminus D)$. Then
		$ x^n+y^n \in D$. This implies that $x^n=-y^n$.
		Let $a=x^n$ and $b=-y^n$, then each of these equations will have $d$ solutions by Lemma 2. Hence, we will have $d$ vertices connected to $d$ other vertices. Now, suppose $x_1$ and $x_2$ are two solutions of the equation $a=x^n$. Then $x_1^n+x_2^n=a+a=2a \notin D$. Similarly, if $y_1$ and $y_2$ are solutions of the equation $b=-y^n$, then they are not adjacent. Therefore, we have a complete bipartite graph $K_{d,d}$. Since each complete bipartite has $2d$ vertices and $|R\setminus D|=m-1$, then the $n-TG(R \setminus D)$ will consist of $\frac{m-1}{2d}=\frac{\alpha}{2}$ complete bipartites $K_{d,d}$.
	\end{enumerate}
\end{proof}

\begin{example}
	The $n-TG(\mathbb{Z}_m \setminus D)$ is connected where m is an odd prime and $D=\{0\}$.
	Take $m=7$ and $n=3$.
	Then, $d=3$ and $\alpha=2$. Hence, $3-TG(\mathbb{Z}_7 \setminus D)=K_{3,3}$ as shown in Figure \ref{fig:F5graph}.
\end{example}

\begin{figure}[!h]
	\centering
	\begin{tikzpicture}
		\tikzset{enclosed/.style={draw, circle, inner sep=0pt, minimum size=.15cm, fill=black}}
		
		\node[enclosed, label={below: 1}] (1) at (0,0) {};
		\node[enclosed, label={below: 2}] (2) at (1.5,0) {};
		\node[enclosed, label={below: 4}] (4) at (3,0) {}; 
		
		\node[enclosed, label={above: 3}] (3) at (0,2) {}; 
		\node[enclosed, label={above: 5}] (5) at (1.5,2) {};
		\node[enclosed, label={above: 6}] (6) at (3,2) {};
		
		\foreach \a in {1,2,4}
		\foreach \b in {3,5,6}
		\draw (\a) -- (\b);
	\end{tikzpicture}
	\caption{The $3\text{-}TG(\mathbb{Z}_7 \setminus D)$}
	\label{fig:F5graph}
\end{figure}
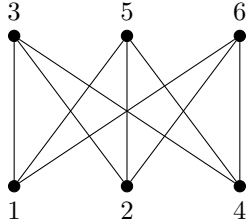

\begin{example}
	The finite field $\mathbb{F}_9$.
	Let $f(x)=x^2+1$. Then, the $\mathbb{F}_9=\mathbb{F}_3/(f(x))$ and $n=5$.
	Here $d=1$ and $\alpha=8$. Then, $5-TG(\mathbb{F}_9 \setminus D)=K_{1,1} \oplus ... \oplus K_{1,1}$ , $4$ times as shown in Figure \ref{fig:F6graph}.
\end{example}

\begin{figure}[!h]
	\centering
	\begin{tikzpicture}[xscale=2.5]
		\tikzset{enclosed/.style={draw, circle, inner sep=0pt, minimum size=.15cm, fill=black}}
		
		\node[enclosed, label={below: $1$}] (a1) at (0,0) {};
		\node[enclosed, label={above: $x$}] (b1) at (0,1) {};
		\draw (a1) -- (b1);
		
		\node[enclosed, label={below: $2$}] (a2) at (1,0) {};
		\node[enclosed, label={above: $2x$}] (b2) at (1,1) {};
		\draw (a2) -- (b2);
		
		\node[enclosed, label={below: $1+x$}] (a3) at (2,0) {};
		\node[enclosed, label={above: $2+2x$}] (b3) at (2,1) {};
		\draw (a3) -- (b3);
		
		\node[enclosed, label={below: $1+2x$}] (a4) at (3,0) {};
		\node[enclosed, label={above: $2+x$}] (b4) at (3,1) {};
		\draw (a4) -- (b4);
	\end{tikzpicture}
	\caption{The $5$-TG$(\mathbb{F}_9 \setminus D)$}
	\label{fig:F6graph}
\end{figure}
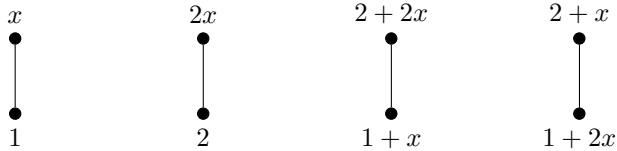

 We invite the reader to compare the following result with \cite[ Corollary 2.12]{1}.

\begin{corollary}
	Let $n \geq 1$ be an integer and R a finite field of order $m$ such that $char(R) \neq 2$. Then, the $n-TG(R \setminus \{0\})$ is connected if and only if $d= \frac{m-1}{2}$.
\end{corollary}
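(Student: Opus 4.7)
The plan is to deduce this directly from Theorem~\ref{T2}, which completely describes the structure of $n\text{-}TG(R\setminus D)$ when $\mathrm{char}(R)\neq 2$ in terms of the parity of $\alpha$ and the value of $d$. Under the current hypotheses $D=\{0\}$, so $|R\setminus D|=m-1\geq 2$ (the case $m=2$ being excluded by $\mathrm{char}(R)\neq 2$).

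For the forward direction, I would argue by contrapositive using Theorem~\ref{T2}. Suppose $d \neq \frac{m-1}{2}$. Two cases arise according to the parity of $\alpha=\frac{m-1}{d}$. If $\alpha$ is odd, part~(1) of Theorem~\ref{T2} gives that $n\text{-}TG(R\setminus D)$ is totally disconnected on at least two vertices, so it is not connected. If $\alpha$ is even, then part~(2) gives $n\text{-}TG(R\setminus D)=K_{d,d}\oplus\cdots\oplus K_{d,d}$ with $\frac{\alpha}{2}$ components; connectedness forces $\frac{\alpha}{2}=1$, i.e., $\alpha=2$, i.e., $d=\frac{m-1}{2}$, contradicting the assumption. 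Hence in either case the graph is disconnected.

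For the converse, assume $d=\frac{m-1}{2}$. Then $\alpha=2$ is even, so Theorem~\ref{T2}(2) applies and gives exactly $\frac{\alpha}{2}=1$ complete bipartite component, namely $n\text{-}TG(R\setminus D)=K_{d,d}$, which is clearly connected.

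There is no real obstacle; the whole argument is an easy bookkeeping on top of Theorem~\ref{T2}. The only small subtlety worth mentioning explicitly is that when $\alpha$ is odd, the graph actually has more than one vertex (so ``totally disconnected'' truly means ``not connected''), which is automatic since $\mathrm{char}(R)\neq 2$ forces $m\geq 3$ and hence $|R\setminus\{0\}|\geq 2$.
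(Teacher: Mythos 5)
Your proposal is correct and follows essentially the same route as the paper: both deduce the corollary directly from Theorem~\ref{T2} by noting that $d=\frac{m-1}{2}$ is equivalent to $\alpha=2$, which is exactly the case where the graph is a single $K_{d,d}$. If anything, your case split on the parity of $\alpha$ in the forward direction is slightly more careful than the paper's version, which implicitly assumes the graph already has the form $K_{d,d}$ when arguing the converse.
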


\begin{proof}
	Suppose that $d= \frac{m-1}{2}$. Since $\alpha = \frac{m-1}{d}$, then $\alpha = 2$. By Theorem~\ref{T2}, the $n-TG(R \setminus D)=K_{d,d} $ is connected. Now, suppose that the $n-TG(R \setminus D)=K_{d,d} $ is connected. Then, the $n-TG(R)$ has $d+d+1$ vertices which mean that $m=2d+1$, and hence $d=\frac{m-1}{2}$.
\end{proof}
 We invite the reader to compare the following result with \cite[ Theorem 2.15]{1}.
\begin{theorem}
	Let $R$ be a finite field and $D=\{0\}$. Then
	\begin{enumerate}
		\item $diam(n-TG(R \setminus D))=$  $0$ , $1 $, $2$, or $\infty$
		\item $gr(n-TG(R \setminus D))= 3,4$ or $\infty$.
	\end{enumerate}
\end{theorem}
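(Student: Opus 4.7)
The plan is to reduce both statements to an inspection of the component graphs $K_d$ and $K_{d,d}$ identified in the preceding structural theorems. Those theorems tell us that $n-TG(R\setminus D)$ is always a vertex-disjoint union of copies of a single graph $H$: namely $H = K_d$ when $char(R)=2$, $H$ is the empty (edgeless) graph on $d$ vertices when $char(R)\neq 2$ and $\alpha$ is odd, and $H = K_{d,d}$ when $char(R)\neq 2$ and $\alpha$ is even. The number of copies is $\alpha$, $d$, or $\alpha/2$ respectively.

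For the diameter claim I would split into cases according to this classification and count the number of connected components. If $n-TG(R\setminus D)$ has more than one component, then there exist vertices with no path between them, so $diam(n-TG(R\setminus D))=\infty$. Otherwise the graph is a single copy of $K_d$, which has diameter $0$ when $d=1$ and diameter $1$ when $d\geq 2$, or a single copy of $K_{d,d}$, which has diameter $1$ when $d=1$ (since $K_{1,1}=K_2$) and diameter $2$ when $d\geq 2$ (any two vertices on the same side are joined through any vertex on the opposite side). In every case the diameter lies in $\{0,1,2,\infty\}$, proving (1).

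For the girth claim the same case analysis applies. A triangle can only occur inside a component of the form $K_d$ with $d\geq 3$, a $4$-cycle can only occur inside a component of the form $K_{d,d}$ with $d\geq 2$ (no odd cycle is available there), and no shorter cycle is possible in either type of block. Consequently $gr(n-TG(R\setminus D)) = 3$ when $char(R)=2$ and $d\geq 3$, $gr(n-TG(R\setminus D)) = 4$ when $char(R)\neq 2$, $\alpha$ is even, and $d\geq 2$, and $gr(n-TG(R\setminus D)) = \infty$ in every remaining case, which gives (2).

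The main point to watch is the bookkeeping of the degenerate sizes $d=1$ and $\alpha=1$ (so that a ``component'' collapses to $K_1$ or $K_2$); these are precisely the configurations that force the diameter down to $0$ or $1$ and send the girth to $\infty$. Once those edge cases are handled, the argument is a routine translation of the already-proved structural theorems into diameter and girth statements.
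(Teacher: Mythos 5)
Your proposal is correct and follows essentially the same route as the paper: both read off the diameter and girth directly from the structural classification of $n-TG(R\setminus D)$ as a disjoint union of copies of $K_d$, of $K_{d,d}$, or of isolated vertices. If anything, your treatment is slightly more careful than the paper's, since you explicitly note that multiple components force diameter $\infty$ and you track the degenerate cases $d=1$ and $\alpha\in\{1,2\}$.
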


\begin{proof}
	\begin{enumerate}
		\item The $n-TG(R \setminus D))$ is either totally disconnected, the union of complete subgraph of $d$ vertices, or the union of complete bipartite. Hence, the diameter is either $0$,$1$ or $2$.
		\item Suppose that the $n-TG(R \setminus D)$ has a cycle. Then, it is either the union of complete subgraph, or the union of complete bipartite. Hence, the girth is either $3$ or $4$.
		If the $n-TG(R \setminus D)$ is totally disconnected, then $gr(n-TG(R \setminus D))= \infty$.
	\end{enumerate}
\end{proof}

\begin{example}
	\begin{enumerate}
		\item $diam(3-TG(\mathbb{Z}_7 \setminus D))=2$ and $gr(3-TG(\mathbb{Z}_7 \setminus D))=4$.
		\item $diam(5-TG(\mathbb{F}_9 \setminus D))=1$ and $gr(5-TG(\mathbb{F}_9 \setminus D))=\infty$ since the graph has no cycle.
		\item $diam(5-TG(\mathbb{F}_4 \setminus D))=\infty$ and $gr(5-TG(\mathbb{F}_4 \setminus D))=\infty$ since the graph is totally disconnected.
	\end{enumerate}
\end{example}

\begin{theorem}
	Let $R=R_1\times...\times R_k$, where $R_i$ is an integral domain for $ 1 \leq i \leq k$, where $k \geq 2$, and let $D = P_1 \times \prod_{i=2}^k R_i $. If $n \geq 1$, then the $n-TG(R)$ is never connected.
\end{theorem}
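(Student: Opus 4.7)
The plan is to exhibit an edge cut in $n\text{-}TG(R)$ that separates $D$ from $R \setminus D$. Since both parts are visibly nonempty, this partition places them in different components and forces the graph to be disconnected.

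The first step is to reduce the adjacency condition to a condition on first coordinates only. Because $D = P_1 \times R_2 \times \cdots \times R_k$, the later coordinates impose no restriction: for $x=(x_1,\dots,x_k)$ and $y=(y_1,\dots,y_k)$ in $R$, we have $x^n + y^n \in D$ if and only if $x_1^n + y_1^n \in P_1$. Thus the adjacency relation in $n\text{-}TG(R)$ depends only on the first coordinate of each vertex, and in fact only on its class modulo $P_1$.

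Next, I would use this reduction to check that no edge crosses between $D$ and $R \setminus D$. Suppose $x \in R \setminus D$ and $y \in D$, so $x_1 \notin P_1$ while $y_1 \in P_1$. If $x$ and $y$ were adjacent, the reduction would give $x_1^n + y_1^n \in P_1$, and combining this with $y_1^n \in P_1$ would force $x_1^n \in P_1$; primality of $P_1$ inside the integral domain $R_1$ would then force $x_1 \in P_1$, a contradiction. Hence no edge of $n\text{-}TG(R)$ joins $D$ to $R \setminus D$.

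Finally, both sides of the partition are nonempty: $0 \in D$, while $(1, 0, \dots, 0) \in R \setminus D$ since $1 \notin P_1$. Therefore $n\text{-}TG(R)$ has at least two connected components and cannot be connected. There is no real obstacle in this proof; the entire argument is driven by the observation that adjacency factors through the first coordinate modulo $P_1$, and the primality step is a one-line check.
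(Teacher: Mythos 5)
Your proof is correct and rests on the same core computation as the paper's: adjacency depends only on the first coordinate modulo $P_1$, and an edge with one endpoint's first coordinate in $P_1$ forces the other's into $P_1$ as well. The paper packages this as a path-following argument (a hypothetical path from $0$ to $1$ would force $1 \in P_1$), whereas you package it as an empty edge cut between $D$ and $R \setminus D$; the latter is slightly cleaner and shows a bit more, namely that $D$ is a union of components, but the two arguments are essentially the same.
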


\begin{proof}
	Suppose that the $n-TG(R)$ is connected. Then, there exists a path from $(0,0,...0)$ to $(1,1,...,1)$. Hence, the $n-TG(R_1)$ is connected. Suppose that $0-w_1-w_2-...-w_m-1_{R_1}$ is a path in $R_1$. Then, $w_1^n \in P_1, w_1^n+w_2^n \in P_1,..., w_m^n+1 \in P_1.$ This implies that $1 \in P_1.$ Which is a contradiction.
\end{proof}

\subsection{The case where $D$ is not an ideal of $R$}

In this section, we will consider $R$ to be an integral domain or a product of integral domains, and $D$ is a union of at least two incomparable prime ideals of $R.$

\begin{remark}
	\begin{enumerate}
		Let $R$ be an integral domain and $D=P_1\cup P_2 \ \cup ... \cup P_k \ , \ k \geq 2$ such that for every $1 \leq i,j \leq k, P_i \nsubseteq P_j$.
		\item The $n-TG(D)$ is always connected since for all $x,y \in D$, $x-0-y$ is a path.
		\item The $n-TG(R)$ is connected if and only if $1=x_1+x_2+...+x_m$ where $x_i \in D$ for $1 \leq i \leq m$.
	\end{enumerate}
\end{remark}
 We invite the reader to compare the following result with \cite[ Theorem 2.17]{1}.
\begin{theorem}\label{T3}
	Suppose that $R$ is an integral domain and $D=P_1 \cup P_2 \cup ... \cup P_k $, where $k \geq 2$ such that for every $1 \leq i,j \leq k, P_i \nsubseteq P_j$. Then, the $n-TG(R)$ is connected if and only if there exists a path between $0$ and $1$.
\end{theorem}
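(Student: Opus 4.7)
The forward implication is immediate from the definition of connectedness: any two vertices of a connected graph are joined by a path, in particular $0$ and $1$.

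For the converse, I would assume a path $0 = v_0, v_1, \ldots, v_m = 1$ in $n$-$TG(R)$, so that $v_i^n + v_{i+1}^n \in D$ for each $0 \leq i < m$, and \emph{scale} this path by an arbitrary element $a \in R$ to produce a walk from $0$ to $a$. Specifically, I would consider the sequence $0 = a v_0, a v_1, \ldots, a v_m = a$. For each consecutive pair one has $(a v_i)^n + (a v_{i+1})^n = a^n (v_i^n + v_{i+1}^n)$, and since $v_i^n + v_{i+1}^n$ lies in some prime $P_j \subseteq D$, the product also lies in $P_j \subseteq D$, giving adjacency. When $a = 0$ nothing needs to be shown; when $a \neq 0$, successive terms are distinct because $R$ is an integral domain, so $av_i = av_{i+1}$ would force $v_i = v_{i+1}$, contradicting the fact that $v_i$ and $v_{i+1}$ are adjacent.

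Having a walk from $0$ to every vertex $a \in R$, connectedness follows at once: for any two vertices $a, b \in R$, concatenate the reverse of the walk from $0$ to $a$ with the walk from $0$ to $b$, and extract a path from the resulting walk. The only place where the hypothesis genuinely enters is the scaling step: multiplication by $a^n$ preserves membership in $D$ precisely because $D$ is a union of prime ideals. I expect this to be the main (indeed, essentially the only) substantive point in the argument; everything else is bookkeeping in the path/walk setup and a routine appeal to cancellation in the integral domain $R$.
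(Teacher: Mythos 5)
Your proposal is correct and follows essentially the same route as the paper: scale the path from $0$ to $1$ by an arbitrary $a\in R$, using that each $v_i^n+v_{i+1}^n$ lies in some prime ideal $P_j\subseteq D$ which absorbs multiplication by $a^n$, and then concatenate the resulting walks through $0$. Your extra remarks on distinctness of the scaled vertices (via cancellation in the domain) and on extracting a path from the walk are details the paper leaves implicit.
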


\begin{proof}
	Suppose that the $n-TG(R)$ is connected, then there exists a path between any two vertices.\\
	Now suppose that there exists a path between $0$ and $1$. Then, $0 - v_1 - v_2 - ... - v_k - 1$. Hence, for any $v_i - v_j$, $v_i^n+v_j^n \in D$.
	Let $w \in R$, then $(wv_i)^n+(wv_j)^n=w^n(v_i+v_j) \in D$. Therefore, $0 - wv_1 -w v_2 - ... - wv_k - w$ is a path from $0$ to $w$, for all $w \in R$ and hence there exists a path between any two vertices in $R$. Therefore, the $n-TG(R)$ is connected.
\end{proof}

\begin{theorem} \label {coprime}
	Suppose that $R$ is an integral domain. Let $n \geq 1$ be an odd integer and $D=P_1\cup P_2 \ \cup ... \cup P_k \ , \ k \geq 2$ such that for every $1 \leq i,j \leq k, P_i \nsubseteq P_j$. Suppose that at least two of the $P_k$'s are coprime, then the $n-TG(R) $ is connected with diameter $2$.
\end{theorem}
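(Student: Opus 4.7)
The plan is to combine Theorem~\ref{T3} (which reduces connectedness to the existence of a path between $0$ and $1$) with an explicit Chinese-Remainder-style construction of a length-two path between arbitrary vertices.

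Assume $P_1$ and $P_2$ are coprime, so there exist $x \in P_1$ and $y \in P_2$ with $x + y = 1$. I would first show that the single element $v = -x$ connects $0$ to $1$: indeed, $0^n + v^n = -x^n \in P_1 \subseteq D$ (so $0 \sim v$), while $v^n + 1^n = 1 - x^n$ lies in $P_2$ because $x \equiv 1 \pmod{P_2}$ (so $v \sim 1$). Here the oddness of $n$ is used to pass from $v = -x$ to $v^n = -x^n$. Applying Theorem~\ref{T3} then gives that $n\text{-}TG(R)$ is connected.

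For the diameter, I would generalize the bridge construction. For any two distinct $u, w \in R$, set
\[
z \;=\; -(uy + wx).
\]
Reducing modulo $P_1$ (where $x \equiv 0$, $y \equiv 1$) gives $z \equiv -u$, and modulo $P_2$ (where $y \equiv 0$, $x \equiv 1$) gives $z \equiv -w$. Since $n$ is odd, $z^n \equiv -u^n \pmod{P_1}$ and $z^n \equiv -w^n \pmod{P_2}$, so $u^n + z^n \in P_1 \subseteq D$ and $z^n + w^n \in P_2 \subseteq D$. If $z$ is distinct from both $u$ and $w$, the path $u - z - w$ witnesses $d(u,w) \leq 2$; in the degenerate case $z \in \{u,w\}$, the same two congruences collapse to $u^n + w^n \in D$, giving $d(u,w) \leq 1$. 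Either way $d(u,w) \leq 2$.

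Finally, to see that the diameter is exactly $2$ rather than $1$, I would note that $0$ and $1$ are not adjacent, since $0^n + 1^n = 1$ cannot lie in $D$ (a union of \emph{proper} prime ideals). Combined with the previous step, $d(0,1) = 2$ and thus $\mathrm{diam}(n\text{-}TG(R)) = 2$. The main conceptual step is designing the bridge $z = -(uy + wx)$: it is a CRT-style interpolation that only works because the two chosen primes are coprime, and it only produces $z^n \equiv -u^n$, $z^n \equiv -w^n$ because $n$ is odd. Once this element is in hand, everything else is routine verification and a short lower-bound remark on $d(0,1)$.
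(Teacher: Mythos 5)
Your proposal is correct and follows essentially the same route as the paper: the paper's proof also builds the CRT-style bridge $w=-xp_1-yp_2$ (with $p_1+p_2=1$, $p_i\in P_i$) between two arbitrary non-adjacent vertices and uses oddness of $n$ to get $x^n+w^n\in P_2$ and $w^n+y^n\in P_1$. Your write-up is in fact slightly more complete, since you also dispose of the degenerate case where the bridge coincides with an endpoint and you justify the lower bound $\mathrm{diam}\geq 2$ via the non-adjacency of $0$ and $1$, both of which the paper leaves implicit.
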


\begin{proof}
	We may assume without loss of generality that $P_1$ and $ P_2$ are coprime. Then, $P_1+P_2=R$ and there exist $p_1 \in  P_1, p_2 \in P_2$ such that $p_1+p_2=1$. Let $x,y \in R$ such that $x=xp_1+xp_2$ and $y=yp_1+yp_2$. Assume that $x$ is not adjacent to $y$ and let $w=-xp_1-yp_2$. Then, $x^n=x^n p_1^n +x^np_2^n +p_1p_2k , \ k \in R$ and $w^n=-x^np_1^n-y^np_2^n+p_1p_2k_1,\ k_1 \in R$. Hence, $x^n+w^n=p^n(x^n-y^n)+p_1p_2k+p_1p_2k_1 \ \in P_2$. Therefore, $x^n+w^n \in D$. Similarly,
	$w^n+y^n=-x^np_1^n+p_1p_2k_1+y^np_1^n+p_1p_2k_3,\ k_3 \in R$, then $w^n+y^n \in D.$ This implies that $w$ is adjacent to $x$ and $y$ and hence $diam(n-TG(R))=2$.
\end{proof}

\begin{corollary}
	Suppose that $R$ is an integral domain. Let $n \geq 1$ be an odd integer and  $D=M_1\cup M_2 \ \cup ... \cup M_k \ , \ k \geq 2$, where $M_i$ is a maximal ideal for all $1 \leq i \leq k$. Then the $n-TG(R) $ is connected with diameter $2$.
\end{corollary}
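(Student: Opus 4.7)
The plan is to derive this as an immediate consequence of Theorem~\ref{coprime}. The only thing to check is that the hypotheses of that theorem are met by the maximal ideals $M_1, \ldots, M_k$.

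First, I would verify that the $M_i$ are pairwise incomparable under inclusion. This is automatic: if $M_i \subseteq M_j$ with both maximal, then by maximality of $M_i$ and properness of $M_j$, we must have $M_i = M_j$. Hence for $i \neq j$, neither $M_i \subseteq M_j$ nor $M_j \subseteq M_i$.

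Second, I would verify coprimality of (at least) two of them. Pick any $i \neq j$. The ideal $M_i + M_j$ properly contains $M_i$ (since $M_j \not\subseteq M_i$ by incomparability), and $M_i$ is maximal, so $M_i + M_j = R$. Thus $M_i$ and $M_j$ are coprime, which is the remaining hypothesis of Theorem~\ref{coprime}.

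With both hypotheses verified, Theorem~\ref{coprime} applies directly and yields that $n\text{-}TG(R)$ is connected with $\mathrm{diam}(n\text{-}TG(R)) = 2$. There is essentially no obstacle here; the proof is a one-line invocation once the two elementary facts about distinct maximal ideals are noted.
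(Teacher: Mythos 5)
Your proof is correct and matches the paper's intent exactly: the corollary is stated without proof precisely because, as you note, distinct maximal ideals are automatically incomparable and pairwise coprime, so Theorem~\ref{coprime} applies immediately. Nothing further is needed.
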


\begin{theorem}
	Suppose that $R$ is an integral domain. Let $n \geq 1$ be an even integer and $D=P_1\cup P_2 \ \cup ... \cup P_k \ , \ k \geq 2$  such that for every $1 \leq i,j \leq k, P_i \nsubseteq P_j$. Suppose that at least two of the $P_k$'s are coprime and there exists a $u\in R$ such that $u^n=-1$. Then the $n-TG(R) $ is connected with diameter 2.
\end{theorem}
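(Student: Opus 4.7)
The plan is to mimic the proof of Theorem~\ref{coprime} verbatim, except that the identity $(-1)^n = -1$, which failed when $n$ became even, is replaced by the hypothesis $u^n = -1$. As before, I would first reduce to the case where $P_1$ and $P_2$ are coprime and fix $p_1 \in P_1$, $p_2 \in P_2$ with $p_1 + p_2 = 1$. Since every element trivially satisfies $x = xp_1 + xp_2$, the goal is to produce, for any distinct non-adjacent $x, y \in R$, a common neighbour $w$.

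The candidate I would take is
\[
w \;=\; u\,(xp_1 + yp_2),
\]
so that $w^n = u^n(xp_1 + yp_2)^n = -(xp_1 + yp_2)^n$. The heart of the proof is then to expand $(xp_1 + yp_2)^n$ and $x^n = x^n(p_1+p_2)^n$ by the binomial theorem and observe that every cross term contains the product $p_1p_2$. Reducing modulo $P_2$ kills $p_2$ (and hence $p_1p_2$), giving $(xp_1+yp_2)^n \equiv x^n p_1^n \pmod{P_2}$ and $x^n \equiv x^n p_1^n \pmod{P_2}$, so
\[
x^n + w^n \;\equiv\; x^n p_1^n - x^n p_1^n \;=\; 0 \pmod{P_2},
\]
whence $x^n + w^n \in P_2 \subseteq D$. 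Reducing modulo $P_1$ and using the analogous identity for $y^n$ produces $y^n + w^n \in P_1 \subseteq D$. Thus $w$ is adjacent to both $x$ and $y$, so any two distinct non-adjacent vertices are joined by a path of length two. Combined with the fact that $0$ and $1$ cannot be adjacent (since $1 \notin D$ because each $P_i$ is prime and hence proper), this yields $\mathrm{diam}(n\text{-}TG(R)) = 2$ and, in particular, connectedness.

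The main obstacle, such as it is, will be the binomial bookkeeping: keeping track that all ``mixed'' monomials genuinely carry a factor of $p_1p_2$ and that the surviving extremal monomials $x^n p_1^n$ (resp.\ $y^n p_2^n$) cancel against the corresponding terms coming from $x^n(p_1+p_2)^n$ and $y^n(p_1+p_2)^n$. A minor technicality, unaddressed in the proof of Theorem~\ref{coprime} and therefore not a true issue here either, is whether $w$ might coincide with $x$ or $y$; if it does, one can perturb by using $-u$ instead of $u$ (which still satisfies $(-u)^n = u^n = -1$ because $n$ is even) to obtain an alternative common neighbour, so no distinct pair of vertices is ever at distance greater than $2$.
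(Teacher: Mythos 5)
Your proposal is correct and takes essentially the same route as the paper: the same common neighbour $w = u(xp_1+yp_2) = uxp_1+uyp_2$, and the same binomial computation (phrased as congruences mod $P_2$ and mod $P_1$) showing $x^n+w^n\in P_2$ and $w^n+y^n\in P_1$. The only remark worth adding is that the worry about $w$ coinciding with $x$ or $y$ resolves itself without any perturbation by $-u$: if $w=x$ then $w^n+y^n\in P_1$ reads $x^n+y^n\in D$, contradicting the assumed non-adjacency of $x$ and $y$ (and symmetrically for $w=y$).
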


\begin{proof}
	We may assume without loss of generality that $P_1$ and $ P_2$ are coprime. Then $P_1+P_2=R$ and there exist $p_1 \in  P_1, p_2 \in P_2$ such that $p_1+p_2=1$. Let $x,y \in R$ such that $x=xp_1+xp_2$ and $ y=yp_1+yp_2$. Assume that $x$ is not adjacent to $y$ and let $w=uxp_1+uyp_2$. We have
	$x^n=x^n p_1^n +x^np_2^n +p_1p_2k , \ k \in R$ and
	$w^n=-x^np_1^n-y^np_2^n+p_1p_2k_1,\ k_1 \in R$.
	Then $x^n+w^n=p^n(x^n-y^n)+p_1p_2k+p_1p_2k_1 \ \in P_2$. Therefore, $x^n+w^n \in D$. Similarly, $w^n+y^n=-x^np_1^n+p_1p_2k_1+y^np_1^n+p_1p_2k_3,\ k_3 \in R$, then $w^n+y^n \in D.$ This implies that $w$ is adjacent to $x$ and $y$ and hence $diam(n-TG(R))=2$.
\end{proof}

\begin{example}
	Let $n \geq 1$ be an odd integer and $D=p_1 \mathbb{Z} \cup p_2 \mathbb{Z}$ where $p_1$ and $p_2$ are two distinct primes. Then, the $n-TG( \mathbb{Z} )$ is connected and $diam(n-TG(\mathbb{Z}))=2$. In fact, $gcd(p_1,p_2)=1$. Thus, $1=c_1p_1+c_2p_2$ for some $c_1,c_2 \in \mathbb{Z}$. Hence, $c_1p_1-1=-c_2p_2 \in p_2 \mathbb{Z}$. Thus, $0^n+(c_1p_1-1)^n \in p_2\mathbb{Z}$ and $0^n+(-c_2p_2)^n \in p_2 \mathbb{Z}$ which means that there exists a path from 0 to 1.
\end{example}

In the following example, we show that the hypothesis, at least two prime ideals are coprime in Theorem \ref{coprime}, is crucial.

\begin{example}
	Let $R=\mathbb{Z}[X,Y]$ and $D=XR \cup YR$. Since  $1  \not =  a + b$ for some $a \in XR$ and $b \in YR$, there is no path from $0$ to $1$.  Hence, for every integer $n \geq 1$, the $n-TG(R)$ is not connected.
\end{example}
 We invite the reader to compare the following result with \cite[ Theorem 2.18]{1}.
\begin{theorem}
	Suppose that $R$ is an integral domain and $D= P_1 \cup ...\cup P_k $ where $k \geq 2$ such that for all $1 \leq i,j \leq k , P_i \not \subseteq P_j$. Let $n \geq 1$ be an odd integer. If the $n-TG(R \setminus D)$ is connected, then the $n-TG(R)$ is connected.
\end{theorem}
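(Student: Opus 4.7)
The plan is to apply Theorem~\ref{T3}: it suffices to produce a path from $0$ to $1$ in $n\text{-}TG(R)$. Two observations make the argument clean. First, for every $d \in D$, we have $0^n + d^n = d^n \in D$ (since $d \in P_i$ implies $d^n \in P_i$), so $0$ is adjacent in $n\text{-}TG(R)$ to every element of $D$. Second, $1 \in R \setminus D$, and by hypothesis $n\text{-}TG(R \setminus D)$ is connected, so $1$ is joined to any vertex of $R \setminus D$ by a path lying inside $R \setminus D$. Consequently, it is enough to exhibit a single ``bridge'' edge $d - w$ in $n\text{-}TG(R)$ with $d \in D$ and $w \in R \setminus D$; concatenating $0 - d - w - \cdots - 1$ then yields the required path.

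To construct the bridge, pick any two primes in the union, say $P_1$ and $P_2$. The incomparability assumption forces $P_1 + P_2 \not\subseteq P_\ell$ for every $\ell \in \{1,\ldots,k\}$: for $\ell \in \{1,2\}$ this would yield $P_j \subseteq P_i$ with $\{i,j\}=\{1,2\}$, and for $\ell \geq 3$ it would yield $P_1 \subseteq P_\ell$, in every case contradicting incomparability. By prime avoidance, $P_1 + P_2 \not\subseteq \bigcup_{\ell=1}^{k} P_\ell = D$, so one may choose $x \in (P_1 + P_2) \setminus D$. Writing $x = a + b$ with $a \in P_1$ and $b \in P_2$, set $d := -a \in P_1 \subseteq D$ and $w := x \in R \setminus D$.

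Using that $n$ is odd, so $(-a)^n = -a^n$, the binomial expansion gives
\begin{equation*}
d^n + w^n = -a^n + (a+b)^n = \sum_{k=1}^{n}\binom{n}{k}a^{n-k}b^{k} = b \cdot \sum_{k=1}^{n} \binom{n}{k} a^{n-k} b^{k-1},
\end{equation*}
which lies in $bR \subseteq P_2 \subseteq D$. Hence $d - w$ is the desired bridge, and combining with the two observations above one obtains a path $0 - d - w - \cdots - 1$ in $n\text{-}TG(R)$. Theorem~\ref{T3} then gives that $n\text{-}TG(R)$ is connected.

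The main obstacle is locating the right algebraic identity: the cancellation $-a^n + (a+b)^n = b \cdot (\ldots)$ relies crucially on $n$ being odd, and then $b \in P_2$ forces the result into $D$. The prime avoidance step producing $x \in (P_1+P_2) \setminus D$ is routine, and the hypothesis that $n\text{-}TG(R\setminus D)$ is connected is used precisely to patch the bridge's $R\setminus D$-endpoint to $1$.
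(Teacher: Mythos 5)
Your proof is correct and follows essentially the same route as the paper's: both produce a single bridge edge between $D$ and $R\setminus D$ via the binomial cancellation $(-a)^n+(a+b)^n\in (b)$ for odd $n$, and then glue the piece $\{0\}\cup D$ (a star centered at $0$) to the connected set $R\setminus D$. The only differences are cosmetic: you justify the existence of the bridge pair by prime avoidance applied to $P_1+P_2$ (a cleaner existence argument than the paper's appeal to ``$D$ is not closed under subtraction''), and you finish by invoking Theorem~\ref{T3} rather than concluding connectivity directly from the two connected pieces and the bridge.
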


\begin{proof}
	Suppose that the $n-TG(R \setminus D)$ is connected. We know that the $n-TG(D)$ is connected. Now, since $D$ is not an ideal of $R$, there exist $x,y \in D \setminus \{0\}$ such that $x-y=r \in R \setminus D$.
	Therefore, $(x-y)^n=r^n = \sum^{n}_{k=1}  {n \choose
		k} (-1)^k x^k y^{n-k}= \sum^{n-1}_{k=1}  {n \choose
		k}(-1)^k x^k y^{n-k}-y^n=xq-y^n$ for some $q\in R$. This implies that $r^n+y^n=xq\in D$; hence, the $n-TG(R)$ is connected.
\end{proof}
 We invite the reader to compare the following result with \cite[ Theorem 2.20].

\begin{theorem}
	Suppose that $R$ is an integral domain and $D= P_1 \cup ...\cup P_k $ where $k \geq 2$ such that for all $1 \leq i,j \leq k , P_i \not \subseteq P_j$. Let $n \geq 1$ be an even integer and suppose that there exists $u \in R$ such that $u^n=-1$. If the $n-TG(R \setminus D)$ is connected then the $n-TG(R)$ is connected.
\end{theorem}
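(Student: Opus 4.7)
The plan is to adapt the strategy of the immediately preceding odd-case theorem, where the key step was to exhibit a single edge bridging the induced subgraph $n\text{-}TG(D)$ and $n\text{-}TG(R\setminus D)$. Since $n\text{-}TG(D)$ is connected (the vertex $0$ is adjacent to every other vertex of $D$, as $D$ is a union of ideals) and $n\text{-}TG(R\setminus D)$ is connected by hypothesis, producing one such edge forces $n\text{-}TG(R)$ to be connected.

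First I would use the hypothesis that $D$ is not an ideal: there exist $x,y\in D\setminus\{0\}$ with $r:=x-y\in R\setminus D$. Next I would expand $r^n=(x-y)^n$ by the binomial theorem. The crucial difference from the odd case is that for even $n$ we have $(-y)^n=y^n$, so the expansion reads
\[
r^n \;=\; y^n \;+\; x\!\left(x^{n-1}+\sum_{k=1}^{n-1}\binom{n}{k}x^{k-1}(-y)^{n-k}\right) \;=\; y^n+xQ
\]
for some $Q\in R$. Rearranging, $r^n-y^n=xQ$. Since $x\in D$, it lies in some $P_i$, and hence $xQ\in P_i\subseteq D$.

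Here the assumption $u^n=-1$ enters to rescue the missing sign: set $w=uy$. Because $u^n=-1$, we get $w^n=u^ny^n=-y^n$, so
\[
r^n+w^n \;=\; r^n-y^n \;=\; xQ \;\in\; D,
\]
which means $r$ is adjacent to $w$ in $n\text{-}TG(R)$. Moreover $w=uy\in D$, since $y\in P_j$ for some $j$ and $P_j$ is an ideal, and $w\neq 0$ because $R$ is a domain with $u\neq 0$ (as $u^n=-1\neq 0$) and $y\neq 0$. Thus $w$ is a genuine vertex of $n\text{-}TG(D)$ adjacent to the vertex $r$ of $n\text{-}TG(R\setminus D)$, which glues the two connected pieces together and completes the proof.

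The main obstacle is precisely the parity issue: in the odd case the expansion of $(x-y)^n$ already produces $r^n+y^n\in D$ for free, but in the even case $(-y)^n=y^n$ only yields $r^n-y^n\in D$. The element $u$ with $u^n=-1$ is exactly what is needed to convert this difference into a sum of $n$-th powers $r^n+(uy)^n$, while preserving membership of $uy$ in $D$; once this substitution is in place the rest of the argument follows the template of the odd case verbatim.
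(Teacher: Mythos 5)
Your proof is correct and follows essentially the same route as the paper: both exploit that $D$ is not closed under addition to produce $x,y\in D\setminus\{0\}$ whose combination lands in $R\setminus D$, expand by the binomial theorem, and use $u^n=-1$ to fix the sign so that a single edge bridges the connected subgraphs $n\text{-}TG(D)$ and $n\text{-}TG(R\setminus D)$. The only cosmetic difference is that the paper builds $u$ into the bridge vertex from the start (taking $r=x+uy$ adjacent to $y$), while you take $r=x-y$ and move the factor $u$ onto the neighbor $w=uy$; the two computations are equivalent.
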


\begin{proof}
	Suppose that the $n-TG(R \setminus D)$ is connected. We know that the $n_TG(D)$ is connected. Now, since $D$ is not an ideal of $R$, there exist $x,y \in D \setminus \{0\}$ such that $x+uy=r \in R \setminus D$.
	Therefore $(x+uy)^n=r^n = \sum^{n}_{k=1}  {n \choose
		k} (-1)^k u^k x^k y^{n-k}= \sum^{n-1}_{k=1}  {n \choose
		k}(-1)^k u^k x^k y^{n-k}-y^n=xq-y^n$ for some $q\in R$. This implies that $r^n+y^n=xq\in D$; hence, the $n-TG(R)$ is connected.
\end{proof}
We invite the reader to compare the following result with \cite[Theorem 2.22]{1}.
\begin{theorem}
	Suppose that $R$ is an integral domain and $D= P_1 \cup \cdots\cup P_k $ where $k \geq 2$ such that for all $1 \leq i,j \leq k , P_i \not \subseteq P_j$. Let $n \geq 1$ be an odd integer. The $n-TG(R)$ is connected if and only if $R=(x_1,x_2,...,x_m)$, where $x_r\in D$ for each $1\leq r\leq m$.
\end{theorem}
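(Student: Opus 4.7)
The plan is to invoke Theorem~\ref{T3} to reduce the connectedness of the $n-TG(R)$ to the existence of a walk from $0$ to $1$, and then to translate such walks into the algebraic condition $1 \in (x_1, \ldots, x_m)$ with each $x_r \in D$.

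For the forward implication, I would take a path $0 = v_0, v_1, \ldots, v_t = 1$ granted by Theorem~\ref{T3} and record $a_i = v_{i-1}^n + v_i^n \in D$ for each edge. The idea is that the alternating sum
\[
\sum_{i=1}^{t} (-1)^{t-i} a_i
\]
telescopes: each inner $v_j^n$ (with $1 \leq j \leq t-1$) appears with coefficients $(-1)^{t-j-1}$ and $(-1)^{t-j}$ that cancel, leaving $(-1)^{t-1} v_0^n + v_t^n = 1$. Because every $P_l$ is an ideal, $-a_i \in D$, so setting $x_i = (-1)^{t-i} a_i \in D$ expresses $1$ as a plain sum of elements of $D$, yielding $R = (x_1, \ldots, x_t)$.

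For the converse, I would start from a representation $1 = r_1 x_1 + \cdots + r_m x_m$ and set $y_i = r_i x_i$. Each $x_i$ lies in some $P_l$, which is an ideal, so $y_i \in D$ and $1 = y_1 + \cdots + y_m$. Writing $s_j = y_1 + \cdots + y_j$, I would exhibit the zigzag walk
\[
0 = s_0, \ s_1, \ -s_1, \ s_2, \ -s_2, \ \ldots, \ s_{m-1}, \ -s_{m-1}, \ s_m = 1.
\]
Every consecutive pair sums to either $0$ or some $y_j$, hence lies in some $P_l \subseteq D$. Since $n$ is odd, $a+b$ divides $a^n + b^n$ in $R$, so $v + w \in P_l$ forces $v^n + w^n \in P_l \subseteq D$; thus each consecutive pair is adjacent in the $n-TG(R)$, giving a walk from $0$ to $1$ and, via Theorem~\ref{T3}, connectedness.

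The main obstacle is the converse: manufacturing an explicit walk from the purely algebraic datum $1 = \sum y_i$. The key trick is inserting $-s_j$ between $s_j$ and $s_{j+1}$, which forces every consecutive sum to collapse to a single $y_j$ or to $0$, and so to stay inside $D$. The forward direction, by contrast, is a routine telescoping calculation once the alternating sum is set up correctly, aided only by the fact that $D$ is closed under negation.
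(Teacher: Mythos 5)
Your proposal is correct and follows essentially the same route as the paper: the forward direction is the same telescoping of the edge-sums $v_{i-1}^n+v_i^n\in D$, and the converse rests on the same key fact that for odd $n$ the element $a+b$ divides $a^n+b^n$, applied to signed partial sums of a decomposition $1=\sum y_i$ with $y_i\in D$. The only cosmetic difference is that the paper pre-assigns alternating signs $w_j=(-1)^{q+j}(y_1+\cdots+y_j)$ to get a path of length $q$, whereas you insert the vertices $-s_j$ explicitly to get a zigzag walk of roughly twice the length, and you delegate the final "path from $0$ to $1$ implies connected" step to Theorem~\ref{T3} rather than re-deriving it.
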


\begin{proof}
	Suppose that the $n-TG(R)$ is connected. Then there is a path $0- v_1- v_2- ...- v_q- 1$ in the $n-TG(R)$. This implies that $v_1^n, v_1^n+v_2^n,..., v_{q-1}^n + v_q^n, v_q^n+1 \in D$. Hence, $1\in (v_1,v_1^n+v_2^n,..., v_{q-1}^n + v_q^n, v_q^n+1)\subseteq (x_1,...,x_m) $. Hence, $R=(x_1,...,x_m)$
	
	Conversely, suppose that $R=(x_1,x_2,...,x_m)$ where $x_r\in D$ for each $1\leq r\leq m$. Let $y \in R$ such that $y \neq 0 $. Then, we have $y=v_1+...+v_q$ for some $v_1,...,v_q\in D$. Let $w_0=0$ and $w_j=(-1)^{q+j}(v_1+...+v_j)$ for each integer $1\leq j\leq q$. Since n is odd, we have $(-1)^{n(q+j)}=(-1)^{q+j}$ and hence we have $w_j^n=(-1)^{q+j}(v_1+...+v_j)^n = w_j^n+w_{j+1}^n=v_{j+1}z \in D$ for some $z\in R$. Thus, $0- w_1- w_2-...- w_q-y$ is a path from $0$ to $y$ in the $n-TG(R)$. Let $0\neq a,b\in R$. Then, there are paths from $a$ to $0$ and from $0$ to $b$. Therefore, there is a path from $a$ to $b$ in the $n-TG(R)$; hence, the $n-TG(R)$ is connected.
\end{proof}
We invite the reader to compare the following result with \cite[Theorem 2.24]{1}.
\begin{theorem}
	Suppose that $R$ is an integral domain and $D= P_1 \cup ...\cup P_k $ where $k \geq 2$ such that for all $1 \leq i,j \leq k , P_i \not \subseteq P_j$. Let $n$ be an odd integer and suppose that $R=(x_1,x_2,...,x_m)$ where $x_r\in D$ for each $1\leq r\leq m$ (i.e. the $n-TG(R)$ is connected). Then \ $diam(n-TG(R))=m$, where $m$ is the minimum number of vertices $x_1,...,x_m$ in $D$ such that $(x_1,...x_m)=R$. Moreover, $diam(n-TG(R))=d(0,1)$.
\end{theorem}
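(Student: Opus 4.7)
The plan is to prove two claims which together force equality among $diam(n\text{-}TG(R))$, $d(0,1)$, and $m$:
\textbf{(i)} $d(0,1) \geq m$, and \textbf{(ii)} $d(a,b) \leq m$ for every $a, b \in R$.
Specializing (ii) to $(a,b)=(0,1)$ gives $d(0,1) \leq m$, while (i) combined with the tautological inequality $diam \geq d(0,1)$ and the global bound from (ii) yields $m \leq d(0,1) \leq diam(n\text{-}TG(R)) \leq m$, forcing equality throughout.

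For (i), take a shortest path $0 = u_0 - u_1 - \cdots - u_L = 1$ and set $a_i := u_i^n + u_{i+1}^n \in D$. A telescoping computation of the alternating sum collapses to
\[
\sum_{i=0}^{L-1}(-1)^i a_i = u_0^n + (-1)^{L-1} u_L^n = (-1)^{L-1},
\]
so $\pm 1$ lies in the ideal $(a_0, \ldots, a_{L-1})$, exhibiting $R$ as an ideal generated by $L$ elements of $D$. Minimality of $m$ then forces $L \geq m$.

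For (ii), fix distinct $a, b \in R$. Since $R = (x_1, \ldots, x_m)$ with $x_r \in D$, every element of $R$ is a sum of exactly $m$ elements of $D$ (decompose $r = r_1 x_1 + \cdots + r_m x_m$ and note each $r_j x_j \in P_{\ell_j} \subseteq D$). Apply this to the specific element $(-1)^m b - a$ to obtain $y_1, \ldots, y_m \in D$ with $y_1 + \cdots + y_m = (-1)^m b - a$. Define $\tau_j := a + y_1 + \cdots + y_j$ and $u_j := (-1)^j \tau_j$, so that $u_0 = a$ and $u_m = (-1)^m \tau_m = b$. Because $n$ is odd,
\[
u_j^n + u_{j+1}^n = (-1)^j (\tau_j^n - \tau_{j+1}^n),
\]
and since $\tau_{j+1} - \tau_j = y_{j+1} \in D$, expanding $\tau_{j+1}^n = (\tau_j + y_{j+1})^n$ by the binomial theorem exhibits $\tau_j^n - \tau_{j+1}^n$ as an $R$-multiple of $y_{j+1}$, hence in the prime $P_{\ell}$ containing $y_{j+1}$. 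The walk $u_0, u_1, \ldots, u_m$ therefore collapses (after shrinking any coinciding consecutive vertices, which only shortens it) to a path from $a$ to $b$ of length at most $m$.

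The main obstacle is the sign bookkeeping in (ii). Because the edge relation involves the \emph{sum} $x^n + y^n$ rather than a difference, the naive linear interpolation $u_j = a + \sum_{i \leq j} y_i$ fails (one picks up a stray $2u_j^n$ term when computing $u_j^n + u_{j+1}^n$). Inserting the alternating signs $u_j = (-1)^j \tau_j$ together with the hypothesis that $n$ is odd converts the sum into a difference, after which the binomial expansion finishes the job; the price to pay is that the target must be shifted from $b$ to $(-1)^m b$ when choosing the decomposition, which is precisely why the oddness of $n$ cannot be dropped.
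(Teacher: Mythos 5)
Your proof is correct and follows essentially the same route as the paper: the lower bound $d(0,1)\ge m$ comes from the alternating sum of the edge elements $u_i^n+u_{i+1}^n$ generating the unit ideal together with the minimality of $m$, and the upper bound comes from an alternating-sign interpolating walk $u_j=(-1)^j\bigl(a+y_1+\cdots+y_j\bigr)$ using that $y_{j+1}$ divides $\tau_{j+1}^n-\tau_j^n$ when $n$ is odd. Your sign bookkeeping (decomposing $(-1)^m b-a$ rather than choosing the paper's scalar $z$, whose stated values contain sign slips) is a cleaner execution of the identical idea.
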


\begin{proof}
	First, we show that any path from $0$ to $1$ in $R$ has a length of at least $m$. Suppose that $0- v_1- ...- v_{k-1}- 1$ is a path from $0$ to $1$ in the $n-TG(R)$ of length $k$. Then $v_1^n, v_1^n+v_2^n,..., v_{k-1}^n+1\in D$ and hence $1\in (v_1^n, v_1^n+v_2^n,..., v_{k-1}^n+1)\subseteq (x_1,...,x_m)$. Thus, $k\geq m$.
	Let $a$ and $b$ be distinct elements in $R$. We will show that there is a path in $R$ with a length less than or equal to $m$. Let $1=x_1+x_2+...+x_m$ for $x_1,..,x_m\in D$. If $m$ is even, define $z=a+b$. If $m$ is odd, define $z=a-b$. In any case, let $d_0=a$. For each $1\leq k\leq m$ let $d_k=-(a+z(x_1+...+x_k))$ if $k$ is odd, and $d_k=x+z(x_1+...+x_k)$ if $k$ is even. Then $d_k^n+d_{k+1}^n=x_{k+1}q \in D$ for some $q\in R$. Also, $d_m=b$. Therefore, $a- d_1-...- d_m-1- b$ is a path of length at most $m$. In particular, a shortest path between $0$ and $1$ would have length $m$. Therefore, $diam(n-TG(R))=m$.
\end{proof}
We invite the reader to compare the following result with \cite[ Theorem 2.25]{1}.
\begin{theorem}
	Let $n$ be an even integer, $R$ be an integral domain, and $D= P_1 \cup ...\cup P_k $, where $k \geq 2$ such that for all $1 \leq i,j \leq k , P_i \not \subseteq P_j$. Suppose that $R=(x_1,x_2,...,x_m)$ where $x_r\in D$ for each $1\leq r\leq m$.  If there is a $u\in R$ such that $u^n=-1$,  then the $n-TG(R)$ is connected and $diam(n-TG(R))= m$ where $m$ is the minimum number of vertices $x_1,...,x_m$ such that $(x_1,...,x_m)=R$. Moreover, $diam(n-TG(R))=d(0,1)$.
\end{theorem}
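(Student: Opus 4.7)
The plan is to follow the structure of the odd-$n$ theorem while replacing the sign-alternation $d_j=\pm(a+zS_j)$ by multiplication by powers of $u$: set $d_j=u^j(a+zS_j)$. Because $u^n=-1$, the identity $d_j^n=(-1)^j(a+zS_j)^n$ holds and reproduces the alternating-sign effect that drove the odd-$n$ argument. Note also that $u$ is a unit in $R$ with $u^{-1}=-u^{n-1}$, since $u\cdot(-u^{n-1})=-u^n=1$; this invertibility is needed to make the target value $d_m=b$ achievable.

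First I would establish the lower bound $d(0,1)\geq m$ exactly as in the odd-$n$ case. A path $0-v_1-\cdots-v_{\ell-1}-1$ of length $\ell$ in $n-TG(R)$ forces $v_1^n,\ v_1^n+v_2^n,\ \ldots,\ v_{\ell-1}^n+1$ all to lie in $D$, and these $\ell$ elements generate an ideal of $R$ containing $1$. Since they are all members of $D$, minimality of $m$ forces $\ell\geq m$. This step does not depend on the parity of $n$ and carries over unchanged from the previous theorem.

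For the upper bound, given $a,b\in R$ I would write $1=x_1+\cdots+x_m$ with $x_r\in D$, set $S_j=x_1+\cdots+x_j$, choose $z=u^{-m}b-a$, and define $d_0=a$ together with $d_j=u^j(a+zS_j)$ for $1\leq j\leq m$. A direct check gives $d_m=u^m(a+z)=b$. For the adjacency of $d_j$ and $d_{j+1}$, the identity $u^n=-1$ yields
\[
d_j^n+d_{j+1}^n=(-1)^j\bigl[(a+zS_j)^n-(a+zS_{j+1})^n\bigr],
\]
and since $A^n-B^n$ is divisible by $A-B=(a+zS_j)-(a+zS_{j+1})=-zx_{j+1}$, this element lies in $x_{j+1}R$. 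Because $x_{j+1}\in D$ belongs to some prime $P_i$ in the given decomposition of $D$, we get $x_{j+1}R\subseteq P_i\subseteq D$, hence $d_j^n+d_{j+1}^n\in D$. The endpoint step $d_0-d_1$ is handled identically, since $d_0^n+d_1^n=a^n-(a+zx_1)^n\in x_1R\subseteq D$.

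This produces a walk $d_0-d_1-\cdots-d_m$ of length $m$ between any two vertices $a$ and $b$, so $n-TG(R)$ is connected and $\mathrm{diam}(n-TG(R))\leq m$. Applied to $(a,b)=(0,1)$, we get $d(0,1)\leq m$, which combined with the lower bound yields $d(0,1)=m$ and $\mathrm{diam}(n-TG(R))=m=d(0,1)$. The main subtlety I expect is justifying the replacement of the odd-case identity $(-1)^{n(q+j)}=(-1)^{q+j}$ by $(u^n)^j=(-1)^j$; this is precisely the place where the even-parity hypothesis is substituted for, and the hypothesis $u^n=-1$ is exactly what makes the telescoping factor of $x_{j+1}$ appear.
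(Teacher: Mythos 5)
Your proof is correct and follows essentially the same strategy as the paper's: the identical lower-bound argument via the ideal generated by the $n$-th power sums along a $0$--$1$ path, and the upper bound via an explicit length-$m$ walk built from the partial sums $S_j$ of $1=x_1+\cdots+x_m$ twisted by $u$. Your uniform choice $d_j=u^j(a+zS_j)$ with $z=u^{-m}b-a$ is in fact a small repair of the paper's construction, which alternates between $u(a+zS_j)$ and $a+zS_j$ and takes $z=a\pm b$ depending on the parity of $m$ --- a choice that does not actually yield $d_m=b$ in general --- whereas your $z$ makes the endpoint condition hold exactly.
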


\begin{proof}
	First, we show that any path from $0$ to $1$ in $R$ has a length of at least $m$. Suppose that $0- v_1- ...- v_{k-1}- 1$ is a path from $0$ to $1$ in the $n-TG(R)$ of length $k$. Then $v_1^n, v_1^n+v_2^n,..., v_{k-1}^n+1\in D$ and hence $1\in (v_1^n, v_1^n+v_2^n,..., v_{k-1}^n+1)\subseteq (x_1,...,x_m)$. Thus, $k\geq m$.
	Let $a$ and $b$ be distinct elements in $R$. We will show that there is a path in $R$ with a length less than or equal to $m$. Let $1=x_1+x_2+...+x_m$ for $x_1,...x_m\in D$. If $m$ is even, define $z=a+b$. If $m$ is odd, define $z=a-b$. In any case, let $d_0=a$. For each $1\leq k\leq m$ let $d_k=u(a+z(x_1+...+x_k))$ if $k$ is odd and $d_k=a+z(x_1+...+x_k)$ if $k$ is even. Then $d_k^n+d_{k+1}^n=x_{k+1}q \in D$ for some $q\in R$. Also, $d_m=b$. Therefore, $a- d_1-...- d_{m-1}- b$ is a path of length at most $m$. In particular, a shortest path between $0$ and $1$ would have length $m$. Therefore, $diam(n-TG(R))=m$.
\end{proof}
We invite the reader to compare the following result with \cite[ Corollary 2.27]{1}.
\begin{corollary}
Let $R$ be a finite product of integral domains and $D$ be a union of incomparable prime ideals. If $diam(n-TG(R))=m$, then $diam(n-TG(R \setminus D)\geq m-2$.
\end{corollary}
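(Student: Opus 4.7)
The plan is to exhibit two vertices of $R\setminus D$ at distance at least $m-2$ in the induced subgraph $n-TG(R\setminus D)$. Leaning on the preceding diameter theorems, I would take $m=diam(n-TG(R))=d(0,1)$ and fix a shortest $0$-to-$1$ path
\[
0=u_0-u_1-u_2-\cdots-u_m=1
\]
in $n-TG(R)$.

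My first step is to locate this path relative to $D$. Because $u_1$ is adjacent to $0$, $u_1^n=0^n+u_1^n\in D$, and since $D$ is a union of prime ideals this forces $u_1\in D$. For each $2\le i\le m$ I would then argue $u_i\in R\setminus D$ by contradiction: if some such $u_i$ lay in $D$, then $0^n+u_i^n=u_i^n\in D$ would make $0$ and $u_i$ adjacent, and the path $0-u_i-u_{i+1}-\cdots-u_m$ of length $m-i+1<m$ would contradict minimality. In particular $u_2$ and $u_m=1$ both lie in $R\setminus D$, and the suffix $u_2-u_3-\cdots-u_m$ shows that the distance from $u_2$ to $1$ in $n-TG(R\setminus D)$ is at most $m-2$.

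The matching lower bound completes the argument. Any path $u_2-w_1-\cdots-w_{k-1}-1$ of length $k<m-2$ in $n-TG(R\setminus D)$ is still a path of the same length in $n-TG(R)$, since the latter is a supergraph. Prepending the two edges $0-u_1-u_2$ then produces a walk from $0$ to $1$ in $n-TG(R)$ of length $k+2<m$, contradicting $d(0,1)=m$. Therefore the distance from $u_2$ to $1$ in $n-TG(R\setminus D)$ equals exactly $m-2$, and in particular $diam(n-TG(R\setminus D))\ge m-2$.

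The main obstacle is the identification $m=d(0,1)$ in the full ``finite product of integral domains'' generality; once it is in hand, the combinatorial endgame above is essentially free. The scaling idea from the proof of Theorem~\ref{T3}, which multiplies an arbitrary $0$-to-$1$ path by $w\in R$ to obtain a $0$-to-$w$ path, adapts directly because $D$ is closed under multiplication by ring elements, so the remainder of the diameter-equals-$d(0,1)$ argument should port over with only minor bookkeeping.
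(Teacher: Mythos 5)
Your argument is correct and follows essentially the same route as the paper: take a shortest $0$--$1$ path realizing $\operatorname{diam}(n\text{-}TG(R))=d(0,1)=m$, show all vertices after $u_1$ lie in $R\setminus D$ (else $0$ would be adjacent to them and shorten the path), and use the suffix from $u_2$ to $1$ as a path of length $m-2$ in $n\text{-}TG(R\setminus D)$. Your explicit verification that $d(u_2,1)$ cannot drop below $m-2$ in the subgraph (by prepending $0-u_1-u_2$) is a detail the paper leaves implicit, and your caveat about the hypothesis $m=d(0,1)$ in the stated generality is a fair one that the paper also glosses over.
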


\begin{proof}
	Since $diam(n-TG(R))=d(0,1)=m$, let $0- v_1- ...- v_{m-1}- 1$ be a path from $0$ to $1$ in $R$.  Clearly, $v_1\in D$. Suppose that for some $2\leq i\leq m-1$ we have $v_i$ in $D$. Therefore, we can have the path $0- v_i- ...- 1$ in the $n-TG(R)$ of length less than $m$. This is a contradiction. Thus, $v_i\in R \setminus D$ for each $2\leq i\leq m-1$ and hence $v_2-...- 1$ is a shortest path between $v_2$ and $1$ in the $n-TG(R \setminus D)$ of length $m-2$. Therefore $diam(n-TG(R\setminus D))\geq m-2$.
\end{proof}

\begin{example}
	Let $n \geq 1$ be an odd integer, $R=\mathbb{Z}[X]$ and $D=(2,X) \cup (3,X)$. We have $R=(2,3,X)$, then the $n-TG(R)$ is connected with $diam(n-TG(R))=3$ and $diam(n-TG(R\setminus D)) \geq 1$.
\end{example}

\begin{example}
	Let  $n \geq 1$ be an odd integer, $R=\mathbb{F_5}[X]$ and $D=(X) \cup (X-1)$. Since $1 = X - (X - 1)$, We have $R=(X,X-1)$. Hence the $n-TG(R)$ is connected with $diam(n-TG(R))=2$.
\end{example}

\begin{theorem}
	Let $R=R_1 \times R_2 \times ... \times R_k$ where $k \geq 2$ and $R_i$ an integral domain. Let $D=\bigcup_{j=1}^{\ell} \left( P_j \times \prod_{i=2}^k R_i \right)$ where $P_1,P_2,...,P_l$ are prime ideals of $R_1$. Then the $n-TG(R)$ is connected if and only if $P_r$ and $P_s$ are coprime in $R_1$ for $1 \leq r,s \leq l$.
\end{theorem}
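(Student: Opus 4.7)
The central observation guiding the plan is that, because each slice making up $D$ is of the form $P_j\times\prod_{i\geq 2}R_i$, membership of $v=(v_1,\ldots,v_k)$ in $D$ depends only on $v_1$. Consequently, for any $v,w\in R$ we have $v^n+w^n\in D$ iff $v_1^n+w_1^n\in D':=P_1\cup\cdots\cup P_\ell$. Since $k\geq 2$, a walk from $0$ to $1_{R_1}$ in the generalized total graph on the integral domain $R_1$ (with $D'$, a union of incomparable prime ideals, playing the role of $D$) lifts to a path from $0_R$ to $1_R$ in the $n-TG(R)$ by freely and distinctly choosing the later coordinates in $R_2\times\cdots\times R_k$. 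By Theorem~\ref{T3}, the $n-TG(R)$ is then connected iff such a walk exists in $R_1$.

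For the sufficiency direction I would appeal directly to Theorem~\ref{coprime}: if two of the $P_j$'s are coprime in $R_1$, then the integral domain $R_1$ equipped with $D'$ satisfies its hypotheses, so the corresponding graph on $R_1$ has a path from $0$ to $1$ of length at most two. Lifting this short walk via the reduction above produces a path $0_R\to 1_R$ in the $n-TG(R)$, and Theorem~\ref{T3} then yields full connectedness.

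For the necessity direction, a path $0_R=w_0,\ldots,w_m=1_R$ projects to a walk $u_0=0,u_1,\ldots,u_m=1$ in $R_1$ with $a_i:=u_i^n+u_{i+1}^n\in P_{j_i}$ for each $i$ and some index $j_i\in\{1,\ldots,\ell\}$. Solving the recursion $u_{i+1}^n=-u_i^n+a_i$ gives $1=u_m^n=\sum_{i=0}^{m-1}(-1)^{m-1-i}a_i\in P_{j_0}+\cdots+P_{j_{m-1}}$. If all the indices $j_i$ coincided with a single $j^\ast$, primeness of $P_{j^\ast}$ would force $u_i\in P_{j^\ast}$ for all $i$ by induction on $i$, culminating in $1=u_m\in P_{j^\ast}$, a contradiction. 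Hence the walk must involve at least two distinct primes, and I would then analyze the first prime-switch along the walk to produce an explicit witness $1\in P_r+P_s$ for some $r\neq s$.

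The principal obstacle is precisely this last step: the recursion above naturally delivers only the weaker conclusion $1\in\sum_j P_j$, whereas the theorem asks for coprimality of a specific pair. My plan to close the gap is to work with a walk of \emph{minimal} length from $0_R$ to $1_R$ and to argue that minimality, combined with the primeness of each $P_j$ and the pairwise incomparability hypothesis, rigidifies the prime-switching in the walk enough to isolate two indices $r,s$ for which $P_r+P_s=R_1$.
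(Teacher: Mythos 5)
Your reduction to the first coordinate and your handling of sufficiency are essentially what the paper does, except that you make explicit the lifting step and the appeal to Theorem~\ref{T3}, which the paper leaves implicit; that part is fine. One caveat you share with the paper: Theorem~\ref{coprime} requires $n$ odd, a hypothesis absent from the statement but genuinely needed. For even $n$ even the sufficiency direction fails: with $R_1=\mathbb{Z}$, $P_1=3\mathbb{Z}$, $P_2=7\mathbb{Z}$ (coprime) and $n=2$, one checks that $x^2+y^2\in 3\mathbb{Z}\cup 7\mathbb{Z}$ forces $3\mid x,y$ or $7\mid x,y$, so the vertex $1$ is isolated.

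The real issue is the necessity direction, and you have located the gap precisely: the telescoping identity only gives $1\in P_{j_0}+\cdots+P_{j_{m-1}}$. But no minimality or ``prime-switch'' refinement can upgrade this to coprimality of a single pair, because that implication is false once $\ell\geq 3$. Take $R_1=\mathbb{Q}[X,Y]$, $P_1=(X)$, $P_2=(Y)$, $P_3=(X+Y-1)$ (pairwise incomparable primes) and any $R_2$. Since $1=X+Y-(X+Y-1)$, we have $R_1=(X,Y,X+Y-1)$ with all three generators in $P_1\cup P_2\cup P_3$, so for odd $n$ the graph on $R_1$ is connected by the paper's generation criterion (the theorem asserting connectedness iff $R=(x_1,\dots,x_m)$ with each $x_r\in D$), and hence $n-TG(R)$ is connected by your lifting argument; yet $P_1+P_2=(X,Y)$, $P_1+P_3=(X,Y-1)$ and $P_2+P_3=(Y,X-1)$ are all proper, so no pair is coprime. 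The paper's own proof of necessity is not valid either: it cites Theorem~\ref{coprime} for its converse, which that theorem does not assert. The statement holds as written only for $\ell=2$, where your telescoping immediately gives $1\in P_1+P_2$; for general $\ell$ the correct equivalent condition is that $R_1$ is generated by finitely many elements of $P_1\cup\cdots\cup P_\ell$. So you should abandon the plan to rigidify the prime switching and instead either restrict to $\ell=2$ or prove the corrected statement.
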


\begin{proof}
	Suppose that the $n-TG(R)$ is connected. Then the $n-TG(R_1)$ is connected and Hence by Theorem \ref{coprime}, at least two prime ideals $P_r$ and $P_s$ are coprime in $R_1$ for $1 \leq r,s \leq l$. Now suppose that at least two prime ideals $P_r$ and $P_s$ are coprime in $R_1$ for $1 \leq r,s \leq l$. Then by Theorem \ref{coprime}, the $n-TG(R_1)$ is connected. Hence, there exists a path from $0$ to $1_{R_1}$. Hence the $n-TG(R)$ is connected.
\end{proof}

\begin{theorem}
	Let $R=R_1 \times R_2 \times ... \times R_k$ where $k \geq 2$ and $R_i$ an integral domain. Let $D = \cup Q_i$  such that each $Q_i$ is a prime ideal of $R$ . Suppose that for at least two prime ideals,  say $Q_i, Q_j, i \not = j$, we have $ Q_i = P_i \times \prod_{s \not = i} R_s$ and $Q_j = P_j \times  \prod_{s \not = j} R_s$, where $P_i$ is a prime ideal of $R_i$ and $P_j$ is a prime ideal of $R_j$. If $n \geq 1$ is an odd integer, then the $n-TG(R)$ is always connected with diameter 2.
\end{theorem}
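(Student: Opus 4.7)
The plan is to observe that the two distinguished prime ideals $Q_i$ and $Q_j$ of $R$ are comaximal in $R$, and then to replay the construction used in Theorem~\ref{coprime}. The proof of Theorem~\ref{coprime} never actually uses that the ambient ring is a domain; only that it is a commutative ring with identity admitting two comaximal primes inside $D$. Once comaximality of $Q_i$ and $Q_j$ is in hand, the same type of witness $w$ will produce a path $x-w-y$ of length at most $2$ between arbitrary vertices $x,y\in R$.

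First I would verify that $Q_i+Q_j=R$. Since $Q_i=P_i\times\prod_{s\neq i}R_s$, every tuple whose $i$-th coordinate is $0$ lies in $Q_i$; in particular the standard idempotent $e_s=(0,\ldots,0,1_{R_s},0,\ldots,0)$ belongs to $Q_i$ for every $s\neq i$. Similarly $e_s\in Q_j$ for every $s\neq j$. Because $i\neq j$, the two collections of basis idempotents $\{e_s : s\neq i\}$ and $\{e_s : s\neq j\}$ together cover all of $\{1,\ldots,k\}$, so $1_R=e_1+\cdots+e_k\in Q_i+Q_j$, whence $Q_i+Q_j=R$. As a byproduct this forces $Q_i\not\subseteq Q_j$ and $Q_j\not\subseteq Q_i$, so $Q_i$ and $Q_j$ form a legitimate pair of ``coprime'' incomparable primes inside $D$.

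Next I would mimic Theorem~\ref{coprime} verbatim. Pick $q_i\in Q_i$ and $q_j\in Q_j$ with $q_i+q_j=1$, and for arbitrary $x,y\in R$ let $w=-xq_i-yq_j$. Writing $x=xq_i+xq_j$ and expanding via the binomial theorem (and using that $n$ is odd so $(-a)^n=-a^n$ in the second identity) yields
\[
x^n = x^nq_i^n + x^nq_j^n + q_iq_j\,k, \qquad w^n = -x^nq_i^n - y^nq_j^n + q_iq_j\,k'
\]
for suitable $k,k'\in R$, since every cross term in the expansion of $(a+b)^n$ lies in $abR$. Adding gives $x^n+w^n = q_j^{\,n}(x^n-y^n)+q_iq_j(k+k')\in Q_j\subseteq D$, and the symmetric computation produces $w^n+y^n\in Q_i\subseteq D$. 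Hence $w$ is adjacent to both $x$ and $y$ in $n\text{-}TG(R)$, and $x-w-y$ is a path of length at most $2$.

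Finally I would note that $d(0,1_R)=2$: indeed $0^n+1^n=1\notin D$, since $D$ is contained in a union of proper prime ideals, so $0$ and $1_R$ are not adjacent, promoting ``diameter $\le 2$'' to equality. The main obstacle is simply bookkeeping inside the binomial expansions --- tracking that every cross term lives in $q_iq_jR\subseteq Q_i\cap Q_j$, and confirming that the parity of $n$ is used at precisely the right place to pass from $(-a)^n$ to $-a^n$ --- but structurally the argument is a clean transcription of the proof of Theorem~\ref{coprime} with $P_1,P_2$ replaced by $Q_i,Q_j$.
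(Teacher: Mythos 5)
Your proposal is correct, but it takes a different route from the paper's. The paper's proof works coordinate-wise: it exhibits the single vertex $y=(0,\ldots,p_i,\ldots,-1+p_j,\ldots,0)$ as a common neighbour of $(0,\ldots,0)$ and $(1,\ldots,1)$, and then leans (implicitly) on Theorem~\ref{T3} to pass from a $0$--$1$ path to connectivity of the whole graph. You instead prove that $Q_i$ and $Q_j$ are comaximal in $R$ via the idempotents $e_s$, and then rerun the midpoint construction of Theorem~\ref{coprime} with $Q_i,Q_j$ in place of $P_1,P_2$, correctly observing that that argument never uses that the ambient ring is a domain. Your version buys something real: it produces a common neighbour $w=-xq_i-yq_j$ for \emph{every} pair $x,y$, hence $d(x,y)\leq 2$ uniformly, whereas the paper's argument as written only controls $d(0,1)$ (and the scaling trick in Theorem~\ref{T3} only yields $d(0,w)\leq 2$ for all $w$, which a priori gives $d(x,y)\leq 4$ for arbitrary pairs). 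You also supply the missing lower bound $d(0,1)\geq 2$ from $1\notin D$, so the claim $\operatorname{diam}=2$ is actually complete in your write-up. Two trivial points worth a sentence each in a final version: if $w$ happens to equal $x$ or $y$ then $x$ and $y$ are already adjacent, so the bound $d(x,y)\leq 2$ still holds; and the parity of $n$ enters only in $w^n=-(xq_i+yq_j)^n$, exactly where you flagged it.
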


\begin{proof}
	Let $x=(0,...,0)$ and $z=(1,...,1)$ $\in R$ and let $y=(0,...,p_i,...,-1+p_j,...,0)$ where $p_i \in P_i$ and $p_j \in P_j$ for $1 \leq i,j \leq k$. We have $0+p_i^n \in P_i$ which means that $x$ is adjacent to $y$. Furthermore, $(-1+p_j)^n+1=-1+p_j^n+cp_j+1 \in P_j$ for some $c \in R$. This implies that $z$ is adjacent to $y$ and the shortest path between $x$ and $z$ is 2.
\end{proof}

\begin{theorem}
	Let $R=R_1 \times R_2 \times ... \times R_k$ where $k \geq 2$ and $R_i$ an integral domain. Let $D = \cup Q_i$  such that each $Q_i$ is a prime ideal of $R$ . Suppose that for at least two prime ideals,  say $Q_i, Q_j, i \not = j$, we have $ Q_i = P_i \times \prod_{s \not = i} R_s$ and $Q_j = P_j \times  \prod_{s \not = j} R_s$, where $P_i$ is a prime ideal of $R_i$ and $P_j$ is a prime ideal of $R_j$. If $n \geq 1$ is an even integer and there exists an element $u$ in $R$ such that $u^n=-1$, then the $n-TG(R)$ is always connected with diameter 2.
\end{theorem}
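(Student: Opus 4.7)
The plan is to mirror the construction in the preceding odd-$n$ result, with the element $u$ (satisfying $u^n=-1$) supplying the sign flip that $(-1)^n=-1$ furnished for free in the odd case. Rather than merely verifying $d(0,(1,\dots,1))=2$ and invoking connectedness, I will show the stronger statement that \emph{every} pair of non-adjacent vertices in $n$-$TG(R)$ admits a common neighbor, following the template of the integral-domain coprime result (Theorem~\ref{coprime} and its even-$n$ companion).

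The key observation is that in the product setting, the ``coprime pair'' $p_1+p_2=1$ used in Theorem~\ref{coprime} has a canonical replacement: if $e_i\in R$ denotes the idempotent $(0,\dots,0,1,0,\dots,0)$ with $1$ in position $i$, then $1-e_i\in Q_i$, $e_i\in Q_j$, and $(1-e_i)+e_i=1$. Given non-adjacent $a,b\in R$, I will set
\[
w \;:=\; u\bigl(a(1-e_i)+b\,e_i\bigr)
\]
and verify directly that $w$ is a common neighbor of $a$ and $b$. Writing $u=(u_1,\dots,u_k)$ with $u_s^n=-1$ in $R_s$ for each $s$, a coordinate computation gives $w_s=u_s a_s$ for $s\neq i$ and $w_i=u_i b_i$, hence $w_s^n=-a_s^n$ for $s\neq i$ and $w_i^n=-b_i^n$. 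The $j$-th coordinate (with $j\neq i$) of $a^n+w^n$ is then $a_j^n-a_j^n=0\in P_j$, so $a^n+w^n\in Q_j\subseteq D$; and the $i$-th coordinate of $w^n+b^n$ is $-b_i^n+b_i^n=0\in P_i$, so $w^n+b^n\in Q_i\subseteq D$. Thus $a\sim w\sim b$, giving $d(a,b)\leq 2$.

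Finally, the diameter equals $2$ (and not $1$), because $0^n+(1,\dots,1)^n=(1,\dots,1)\notin D$: no proper prime ideal of $R$ contains $1$, so $0$ and $(1,\dots,1)$ are non-adjacent. The main (mild) obstacle is the bookkeeping in the coordinate computation: one has to see that the cancellation $a_s^n-a_s^n$ lands on coordinate $j$ and $b_i^n-b_i^n$ lands on coordinate $i$, which are exactly the coordinates that detect membership in $Q_j$ and $Q_i$ respectively. The factor $u$ is precisely what converts the otherwise-additive residual term $+a_s^n$ (which would appear in $w_s^n$ without the $u$ when $n$ is even) into the $-a_s^n$ needed for the cancellation to succeed.
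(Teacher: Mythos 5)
Your proof is correct, and it actually does more than the paper's own argument. The paper exhibits only a single length-two path, from $(0,\dots,0)$ to $(1,\dots,1)$ through the midpoint $(0,\dots,p_i,\dots,u+p_j,\dots,0)$, and then asserts connectedness and diameter $2$; strictly speaking that verifies only $d(0,1)=2$ and leaves the bound $d(a,b)\le 2$ for arbitrary $a,b$ implicit. Your argument closes that gap in one stroke: the idempotent mixture $w=u\bigl(a(1-e_i)+b\,e_i\bigr)$ is a common neighbour of any two non-adjacent vertices, because $u_s^n=-1$ holds componentwise, so $w_s^n=-a_s^n$ for $s\neq i$ and $w_i^n=-b_i^n$, making coordinate $j$ of $a^n+w^n$ and coordinate $i$ of $w^n+b^n$ vanish --- and these are exactly the coordinates tested by $Q_j$ and $Q_i$. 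One small detail worth adding: since the graph is simple you need $w\notin\{a,b\}$, which follows because $w=a$ would give $a^n+b^n=w^n+b^n\in Q_i\subseteq D$ and $w=b$ would give $a^n+b^n=a^n+w^n\in Q_j\subseteq D$, contradicting non-adjacency in either case. Together with your observation that $1_R$ lies in no $Q_s$, so $0$ and $(1,\dots,1)$ are non-adjacent, this pins the diameter at exactly $2$. In short: same underlying idea (use the two distinguished coordinates $i$ and $j$ and the root $u$ of $-1$ to build a midpoint), but your version proves the uniform statement the theorem actually claims, whereas the paper's checks it only for the pair $(0,1)$.
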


\begin{proof}
	Let $x=(0,...,0)$ and $z=(1,...,1)$ $\in R$ and let $y=(0,...,p_i,...,u+p_j,...,0)$ where $p_i \in P_i$ and $p_j \in P_j$ for $1 \leq i,j \leq k$. We have $0+p_i^n \in P_i$ which means that $x$ is adjacent to $y$. Furthermore, $(u+p_j)^n+1=u^n+p_j^n+cup_j+1 \in P_j$ for some $c \in R$. This implies that $z$ is adjacent to $y$ and the shortest path between $x$ and $z$ is 2.
\end{proof}

\begin{example}
	Let $R=\mathbb{Z}_2 \times \mathbb{Z}_2$ and $D= \mathbb{Z}_2 \times \{0\} \cup \{0\} \times \mathbb{Z}_2$. The $3-TG(R)$ is connected with diameter 2 as shown in Figure~\ref{f8}.
\end{example}

\begin{example}
	Let $R=\mathbb{Z}_2 \times \mathbb{Z}_3$ and $D= \mathbb{Z}_2 \times \{0\} \cup \{0\} \times \mathbb{Z}_3$. Here we have $1^2=-1$ in $\mathbb{Z}_2$ and the $2-TG(R)$ is connected as shown in Figure~\ref{f9}.
\end{example}

\begin{figure}[h!]
	\centering
	\begin{tikzpicture}
		\tikzset{enclosed/.style={draw, circle, fill=black, inner sep=1pt, minimum size=6pt}}
		
		\node[enclosed, label=below:{$(0,0)$}] (A) at (0,0) {};
		\node[enclosed, label=below:{$(1,0)$}] (B) at (2,0) {};
		\node[enclosed, label=above:{$(1,1)$}] (C) at (2,2) {};
		\node[enclosed, label=above:{$(0,1)$}] (D) at (0,2) {};
		
		\draw (A) -- (B);
		\draw (B) -- (C);
		\draw (C) -- (D);
		\draw (D) -- (A);
	\end{tikzpicture}
	\caption{The $3$-TG$(\mathbb{Z}_2 \times \mathbb{Z}_2)$}
	\label{f8}
\end{figure}

\begin{figure}[h!]
	\centering
	\begin{tikzpicture}
		\tikzset{enclosed/.style={draw, circle, fill=black, inner sep=1pt, minimum size=6pt}}
		
		\node[enclosed, label=below:{$(0,0)$}] (A) at (0,0) {};
		\node[enclosed, label=below:{$(0,1)$}] (B) at (2,0) {};
		\node[enclosed, label=below:{$(0,2)$}] (C) at (4,0) {};
		
		\node[enclosed, label=above:{$(1,0)$}] (D) at (0,2) {};
		\node[enclosed, label=above:{$(1,1)$}] (E) at (2,2) {};
		\node[enclosed, label=above:{$(1,2)$}] (F) at (4,2) {};
		
		\draw (A) -- (B);
		\draw (B) -- (C);
		\draw (A) -- (D);
		\draw (B) -- (E);
		\draw (C) -- (F);
		\draw (D) -- (E);
		\draw (E) -- (F);
	\end{tikzpicture}
	\caption{The $2$-TG$(\mathbb{Z}_2 \times \mathbb{Z}_3)$}
	\label{f9}
\end{figure}

\begin{example}
	Let $R=\mathbb{Z}_3 \times \mathbb{Z}_7=R_1\times R_2$ and $D= \mathbb{Z}_3 \times \{0\} \cup \{0\} \times \mathbb{Z}_7$. Here there does not exist an $x \in R_i$ such that $x^2=-1$, $i \in \{1,2\}.$ The $2-TG(R)$ is not connected.
\end{example}

\begin{example}
	Let $R= \mathbb{F}_9 \times \mathbb{F}_{25}$ and $D=\{0\} \times \mathbb{F}_{25} \cup \mathbb{F}_9 \times \{0\}= P_1 \cup P_2$. Since $P_1$ and $P_2$ are coprime, and there exists a $u$ in $R$ such that $u^2=-1$, then the $2-TG(R) $ is connected.
\end{example}

\begin{example}
	Let $R= \mathbb{F}_5 \times \mathbb{F}_5$ and $D=\{0\} \times \mathbb{F}_5 \cup \mathbb{F}_5 \times \{0\}= P_1 \cup P_2$. The $n-TG(R) $ is always connected.
\end{example}

\begin{example}
	Let $R=\mathbb{Z}_7 \times \mathbb{Z}$ and $D=\{0\} \times \mathbb{Z} \cup \mathbb{Z}_7 \times 3 \mathbb{Z}$. The $3-TG(R)$ is connected while the $2-TG(R)$ is not.
\end{example}

\begin{example}
	Let $R=\mathbb{F}_9 \times \mathbb{Z}[X]$ and $D=\{0\} \times \mathbb{Z}[X] \cup \mathbb{F}_9 \times P$ where $P$ is a prime ideal in $\mathbb{Z}[X]$. The $n-TG(R)$ is connected for $n \geq 1$ odd integer.
\end{example}

\begin{theorem}
	Let $R$ be an integral domain and $D= P_1 \cup ...\cup P_k $ where $k \geq 2$ such that for all $1 \leq i,j \leq k , P_i \not \subseteq P_j$. Let $n \geq 1$, we have the following
	\begin{enumerate}
		\item $gr(n-TG(D))=3$ or $gr(n-TG(D))=\infty$.
		\item If $n$ is odd, then the $gr(n-TG(R \setminus D))=3,4$ or $\infty$.
		\item If $n$ is even and $x^n=-1$ for some $x \in R$, then the $gr(n-TG(R \setminus D))=3,4$ or $\infty$.
	\end{enumerate}
\end{theorem}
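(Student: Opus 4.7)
For part (1), the proof hinges on $0 \in D$ serving as a universal vertex inside the induced subgraph $n-TG(D)$: for any $x \in P_i \subseteq D$ we have $0^n + x^n = x^n \in P_i$, so $0$ is adjacent to every other vertex of $D$. Consequently, if $n-TG(D)$ contains any edge $x-y$ between two nonzero vertices, then $0-x-y-0$ is a triangle and $gr(n-TG(D)) = 3$. Otherwise every edge is incident to $0$, the graph is a star plus isolated vertices, hence acyclic, and the girth is $\infty$. (In fact, incomparability of the primes in an integral domain forces each $|P_i| \ge 3$---a prime ideal of size $2$ would force $R \cong \mathbb{Z}_2$, which admits only the single prime $\{0\}$---so the second possibility never occurs; phrasing the conclusion as ``$3$ or $\infty$'' simply keeps parts (1)--(3) uniform.)

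For parts (2) and (3), I would argue by contradiction from a shortest cycle. Assume the girth $g$ of $n-TG(R \setminus D)$ satisfies $5 \le g < \infty$, and fix a chordless shortest cycle $v_1 - v_2 - \cdots - v_g - v_1$ with indices $a_i$ such that $v_i^n + v_{i+1}^n \in P_{a_i}$. Three tools come into play: (a) the multiplicative closure identity $(rv)^n + (rw)^n = r^n(v^n + w^n)$ already used in Theorem~\ref{T3}, which lets us translate any edge by any $r \in R \setminus D$ while remaining in the induced subgraph, since each $P_i$ is prime; (b) the parity tool of the case---for odd $n$, $(-x)^n = -x^n$ makes $x \mapsto -x$ a graph automorphism of $n-TG(R)$, and for even $n$ the hypothesis $u^n = -1$ makes $x \mapsto ux$ play the same role; and (c) the primality of each $P_i$, which converts sums in $P_i$ into ring-theoretic information modulo $P_i$. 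When two consecutive cycle-edges share a prime, say $a_i = a_{i+1}$, subtracting the two edge-relations gives $v_i^n - v_{i+2}^n \in P_{a_i}$, and the parity tool converts the minus sign into a plus, producing the adjacency $v_i \sim -v_{i+2}$ (in case 2) or $v_i \sim u v_{i+2}$ (in case 3); if this new vertex coincides with some $v_j$ already on the cycle, we have a chord, contradicting minimality of $g$.

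The main obstacle is twofold: first, consecutive edges may all use distinct primes, leaving no immediate cancellation to exploit; second, even when a cancellation is available, the derived vertex ($-v_{i+2}$ or $u v_{i+2}$) need not lie on the cycle, so it is not automatically a chord. The planned workaround is a pigeonhole-with-multiplication argument: translate individual edges by carefully chosen scalars from $R \setminus D$ in order to move two (possibly non-consecutive) edges of the cycle into a common prime, and then recombine them as above; if the resulting adjacency lands on a new vertex rather than an existing one, extend the construction by one more consecutive cycle-edge to close a length-$3$ or length-$4$ cycle through that new vertex. Managing the bookkeeping to ensure the derived vertices are distinct and nonzero, and ruling out degenerate coincidences where the chord construction collapses, is where the technical heart of parts (2) and (3) resides; part (3) then mirrors (2) almost verbatim with $x \mapsto ux$ replacing $x \mapsto -x$ throughout.
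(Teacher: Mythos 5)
Your part (1) is correct and is essentially the paper's argument: $0$ is adjacent to every vertex of $D$, so any edge between two nonzero vertices of $D$ closes a triangle through $0$, and otherwise the graph is a star and hence acyclic. (Your parenthetical observation that incomparability of $k\geq 2$ primes in a domain forces each $P_i$ to have at least three elements, so that the girth is in fact always $3$, is a correct side remark.)

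For parts (2) and (3), however, there is a genuine gap: you set up an argument by contradiction from a chordless shortest cycle of length $g\geq 5$ and then describe a ``pigeonhole-with-multiplication'' strategy whose key steps (forcing two edges into a common prime, ensuring the derived vertex is a chord rather than a new vertex, ruling out degeneracies) you explicitly leave unresolved; as written, the proof of (2) and (3) is not complete, and it is far from clear that the translation/pigeonhole bookkeeping can be made to work. The idea you are missing is much simpler and makes the whole shortest-cycle apparatus unnecessary: for odd $n$, \emph{every} vertex $z$ is adjacent to $-z$, since $z^n+(-z)^n=0\in D$, and $-1$ is a unit so $-z\in R\setminus D$ whenever $z\in R\setminus D$. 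Hence a single edge $x - y$ of the induced subgraph already yields the $4$-cycle $x - y - (-y) - (-x) - x$: the edges $y - (-y)$ and $(-x) - x$ come from the observation above, and $(-y)^n+(-x)^n=-(x^n+y^n)$ lies in the same prime $P_i$ as $x^n+y^n$. One only needs $\operatorname{char}R\neq 2$ and $x\neq -y$ to guarantee the four vertices are distinct, and an edge with $x\neq -y$ can always be extracted from any cycle. This is exactly the paper's proof, and part (3) is the same construction with $z\mapsto uz$ (where $u^n=-1$, so $u$ is a unit) in place of $z\mapsto -z$. You did note that $z\mapsto -z$ is a graph automorphism, but you used it only to flip signs in a cancellation identity and never observed that $z$ and $-z$ are themselves adjacent, which is the whole point.
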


\begin{proof}
	\begin{enumerate}
		\item Let $x,y \in D$. If $x^n+y^n \in D$, then $0-x-y-0$ is a cycle of length $3$ in the $n-TG(D)$.
		If for all $ x,y \in D\setminus \{0\}, x^n+y^n \notin D$, we will have a star graph since for all $ x \in D \setminus \{0\}$ we have $0^n+x^n \in D$. Therefore, the $gr(n-TG(D))=\infty$.
		\item Assume that $char(R) \neq 2$. Then $x \neq -x$ for each $x \in R$. Suppose that $R \setminus D$ contains a cycle. Then there exist $x, y \in R \setminus D$ such that $x \neq -y$ and $x^n + y^n \in D$. This implies that the sequence $x - y -( -y) -( -x) - x$ forms a $4$-cycle in the graph $n$-TG$(R \setminus D)$. Thus, we conclude that $gr(n-TG(R \setminus D)) \leq 4$.
		\item Assume that $char(R) \neq 2$, $n$ is even, and $u^n = -1$ for some $u \in R$. Suppose $R \setminus D$ contains a cycle. Then there exist distinct $x, y \in R \setminus D$ such that $x \neq uy$ and $x^n + y^n \in D$. This implies that the sequence $x - y - (uy) - (ux) - x$ forms a $4$-cycle in the graph $n-TG(R \setminus D)$. Thus, the girth satisfies: $
		gr(n-TG(R \setminus D)) \leq 4$.
	\end{enumerate}
\end{proof}

\begin{example}
	\begin{enumerate}
		\item If $R=\mathbb{Z}_2 \times \mathbb{Z}_3$ and $D=\{0\} \times \mathbb{Z}_3 \  \cup \mathbb{Z}_2 \times \{0\}$. The $2-TG(D)$ has the cycle $(0,0)-(1,0)-(0,2)-(0,0)$  with $gr(n-TG(D))=3$.
		\item If $R=\mathbb{Z}_2 \times \mathbb{Z}_3$ and  $D=\{0\} \times \mathbb{Z}_3 \  \cup \mathbb{Z}_2 \times \{0\}$. The $2-TG(R \setminus D)$ is totally disconnected and $gr(n-TG(R \setminus D))=\infty$.
		\item If $R=\mathbb{Z}_2 \times \mathbb{Z}_3$, then the $3-TG(R \setminus D))$ has no cycle and therefore the $gr(n-TG(R\setminus D))=\infty$. If $R=\mathbb{Z}_3 \times \mathbb{Z}_3$, then the $3-TG(R \setminus D))$ has the cycle $(1,1)-(1,2)-(2,2)-(1,1)$. Therefore, the $gr(n-TG(R \setminus D))=3$.
	\end{enumerate}
\end{example}
We invite the reader to compare the following result with \cite[ Theorem 2.37]{1}.
\begin{theorem}
	For any integer $m \geq 2$, there exists an integral domain $R$ such that for every integer $n \geq 1$, the $n-TG(R)$ is connected and $diam(n-TG(R))=m$.
\end{theorem}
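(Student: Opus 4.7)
My plan is to exhibit a single pair $(R,D)$ with $R$ an integral domain and $D$ a union of pairwise incomparable prime ideals of $R$, arranged so that (i) the minimum number of elements of $D$ that generate $R$ as an ideal is exactly $m$, and (ii) $-1$ admits an $n$-th root in $R$ for every $n\geq 1$. Once both conditions are verified, the two diameter theorems proved just above (the odd-$n$ result and its even-$n$ counterpart requiring $u^n=-1$) yield $\operatorname{diam}(n\text{-}TG(R))=m$ for every $n\geq 1$, and a finite diameter automatically gives connectedness.

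Concretely, let $F$ be an algebraically closed field (for instance $F=\mathbb{C}$ or $F=\overline{\mathbb{Q}}$), put $R=F[X_1,\dots,X_{m-1}]$, and set
$$D=(X_1)\cup(X_2)\cup\cdots\cup(X_{m-1})\cup(X_1+X_2+\cdots+X_{m-1}-1).$$
Each of these principal ideals is generated by a degree-one (hence irreducible) polynomial, so all $m$ ideals are prime, and any two generators are non-associate, making the ideals pairwise incomparable. The telescoping identity $X_1+\cdots+X_{m-1}-(X_1+\cdots+X_{m-1}-1)=1$ shows that these $m$ elements of $D$ generate $R$ as an ideal, so $R$ is generated by $m$ elements of $D$. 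Condition (ii) holds because for every $n\geq 1$ the field $F$ contains a $2n$-th root of unity $u$ satisfying $u^n=-1$, and $F\subseteq R$.

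The main step, and the place where I expect the genuine work, is the matching lower bound: no fewer than $m$ elements of $D$ generate $R$. Suppose $f_1,\dots,f_k\in D$ satisfy $(f_1,\dots,f_k)=R$. If some prime component $(X_\ell)$ contains none of the $f_i$, I would evaluate at the point $p\in F^{m-1}$ whose $\ell$-th coordinate is $1$ and whose other coordinates are $0$: every $f_i$ lying in $(X_j)$ with $j\neq\ell$ vanishes at $p$, and any $f_i\in(X_1+\cdots+X_{m-1}-1)$ also vanishes since $1-1=0$, contradicting $1\in(f_1,\dots,f_k)$. If instead no $f_i$ lies in $(X_1+\cdots+X_{m-1}-1)$, evaluation at the origin produces the same contradiction. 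Hence at least one $f_i$ must come from each of the $m$ prime ideals, so $k\geq m$. This confirms that the minimum number of generators of $R$ drawn from $D$ is exactly $m$, and the earlier diameter theorems then finish the proof.
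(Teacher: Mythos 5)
Your construction is essentially the paper's own (the paper takes $R=\mathbb{Z}_2[X_1,\dots,X_{m-1}]$ with the analogous $D$, using characteristic $2$ rather than an algebraically closed field to guarantee an $n$-th root of $-1$), and the reduction to the two preceding diameter theorems is legitimate; the primality and incomparability of the components, the telescoping upper bound, and the existence of $u$ with $u^n=-1$ are all fine. The problem is the final inference in your lower bound. You correctly show that each of the $m$ prime components must contain at least one of the $f_i$, but you then conclude ``so $k\geq m$.'' That does not follow, because a single element of $D$ can lie in several components at once. For instance, with $m=3$ take $f_1=X_1X_2$ and $f_2=X_1+X_2-1$: here $k=2$, yet each of the three components contains one of the $f_i$, so your argument produces no contradiction even though it must (and indeed $(f_1,f_2)\neq R$, since both polynomials vanish at $(0,1)$). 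The covering condition you establish is necessary but places no lower bound on $k$.

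The gap is repairable by running the evaluation argument in the opposite direction. For each $f_i\in D$ choose \emph{one} hyperplane $H_{j_i}$ from the list $V(X_1),\dots,V(X_{m-1}),V(X_1+\cdots+X_{m-1}-1)$ such that $f_i$ lies in the corresponding principal ideal. If $k\leq m-1$, then at most $m-1$ distinct hyperplanes occur among the $H_{j_i}$, and any $m-1$ of these $m$ hyperplanes have a common point: the origin if $V(X_1+\cdots+X_{m-1}-1)$ is omitted, and the point with $\ell$-th coordinate $1$ and all others $0$ if $V(X_\ell)$ is omitted. Every $f_i$ vanishes at that common point, contradicting $1\in(f_1,\dots,f_k)$. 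With this correction the minimum number of generators of $R$ drawn from $D$ is exactly $m$ and your proof goes through. (For what it is worth, the paper's own proof only exhibits the path of length $m$ from $0$ to $1$ and omits this lower bound entirely, so you identified correctly where the genuine work lies; the counting step just needs to be done as above.)
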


\begin{proof}
	Let $R = \mathbb{Z}_2[X_1,X2, ...,X_{m-1}]$ and let $ D = X_1R \cup X_2R \cup ...\cup X_{m-1}R \cup (1+X_1 + X_2 + ... + X_{m-1})R$. Take $F_i = X_1 + ... + X_i,$ so $F_{i+1} = F_i + X_{i+1}$ for $0 \leq i \leq m-1$. For some $d \in D$, we have $F_i^n+F_{i+1}^n=F_i^n+(F_i+X_{i+1})^n=F_i^n+F_i^n+dX_{i+1}+X_{i+1}^n=dX_{i+1}+X_{i+1}^n. $ Hence, $F_i^n+F_{i+1}^n \in D.$
\end{proof}

\begin{example}
	Let $R = \mathbb{Z}_2[X_1,X_2,X_3]$ and $D = X_1R \cup X_2R \cup X_3R \cup (1 + X_1 +X_2 + X_3)R$. Then, $0 - X_1 -(X_1 + X_2) - (X_1 + X_2 + X_3) - 1$ is the shortest path of length 4 between 0 and 1. For some $d \in D$, we have $(X_1 + X_2 + X_3)^n + 1^n
	= (X_1 + X_2 + X_3 + 1 + 1)^n + 1^n = (X_1 + X_2 + X_3 + 1)^n + d(X_1+X_2 + X_3 + 1)  + 1^n + 1^n=(X_1 + X_2 + X_3 + 1)^n + d(X_1+X_2 + X_3 + 1) \in D$.
\end{example}
{\bf Acknowledgment}

The first-named author would like to thank the second-named author for his patience and guidance during the work on this project. 
\bibliographystyle{amsalpha}

\end{document}